\documentclass[11pt, a4paper]{amsart}

\usepackage[ansinew]{inputenc}
\usepackage[all]{xy}
\SelectTips{cm}{}
\usepackage[pagebackref,
  ,pdfborder={0 0 0}
  ,urlcolor=black,a4paper,hypertexnames=false]{hyperref}
\hypersetup{pdfauthor={Clara L\"oh, Marco Moraschini}
  ,pdftitle=Topological volumes of fibrations: A note on open covers}

\usepackage{amsfonts}
\usepackage{amssymb, amsmath,amsthm, mathtools}
\usepackage{mathrsfs}  
\usepackage{tabularx, hyperref}
\usepackage{enumerate}

\usepackage{tikz-cd}


\makeatletter
\newtheorem*{rep@theorem}{\rep@title}
\newcommand{\newreptheorem}[2]{%
\newenvironment{rep#1}[1]{%
 \def\rep@title{#2 \ref{##1}}%
 \begin{rep@theorem}}%
 {\end{rep@theorem}}}
\makeatother



\newtheorem{lemma}{Lemma}[section]
\newtheorem{thm}[lemma]{Theorem}
\newreptheorem{thm}{Theorem}  
\newtheorem{prop}[lemma]{Proposition}
\newreptheorem{prop}{Proposition}  
\newtheorem{cor}[lemma]{Corollary}
\newreptheorem{cor}{Corollary}  

\theoremstyle{definition}
\newtheorem{defi}[lemma]{Definition}

\newtheorem{quest}[lemma]{Question}
\newreptheorem{quest}{Question}  

\newtheorem{example}[lemma]{Example}
\newtheorem{rem}[lemma]{Remark}

\theoremstyle{definition}

\makeatletter
\newcommand\norm{\bBigg@{0.8}}

\makeatother
 
 \newcommand{\indnorm}[2][flex]{\csname #1l\endcsname\|#2%
                                 \csname #1r\endcsname\|\mathclose{}}
                                  \newcommand{\indnorml}[4][flex]{\csname #1l\endcsname\|#2%
                                 \csname #1r\endcsname\|_{#3}^{#4}\mathclose{}}
\newcommand{\sv}[2][flex]{\indnorm[#1]{#2}}

\DeclareMathOperator{\catop}{cat}
\DeclareMathOperator{\LS}{LS}
\DeclareMathOperator{\Am}{Am}
\DeclareMathOperator{\amcat}{\catop_{\Am}}
\DeclareMathOperator{\lscat}{\catop_{\LS}}

\def\gcat{\catop_{\mathscr{G}}}

\DeclareMathOperator{\Poly}{Poly}
\DeclareMathOperator{\Subexp}{Subexp}
\DeclareMathOperator{\Exp}{Exp}
\DeclareMathOperator{\fingen}{fg}
\def\Polyfg{\Poly^{\fingen}}
\def\Subexpfg{\Subexp^{\fingen}}
\def\Expfg{\Exp^{\fingen}}
\DeclareMathOperator{\uexp}{uexp}

\DeclareMathOperator{\Ob}{Ob}
\DeclareMathOperator{\Group}{{\sf Group}}

\DeclareMathOperator{\mult}{mult}
\DeclareMathOperator{\ent}{minent}
\DeclareMathOperator{\volent}{ent}
\DeclareMathOperator{\Riem}{Riem}
\DeclareMathOperator{\vol}{vol}

\DeclareMathOperator{\comp}{comp}

\DeclareMathOperator{\im}{im}

\DeclareMathOperator{\id}{id}

\newcommand{\fa}[1]{%
  \forall_{#1}\quad}

\newcommand{\qand}{%
  \qquad\text{and}\qquad}

\newcommand{\N}{\ensuremath {\mathbb{N}}}
\newcommand{\R} {\ensuremath {\mathbb{R}}}

\newcommand{\Z} {\ensuremath {\mathbb{Z}}}

\renewcommand{\rho}{\varrho}
\def\phi{\varphi}

\def\args{\;\cdot\;}
\DeclareMathOperator{\const}{const}

\usepackage{color}
\usepackage{pdfcolmk}

\long\def\forget#1{}

\def\longrightarrow{\rightarrow}
\def\longmapsto{\mapsto}
\def\widetilde{\tilde}

\makeatletter
\@namedef{subjclassname@2020}{%
  \textup{2020} Mathematics Subject Classification}
\makeatother

\begin{document}

\title[Topological volumes of fibrations: A note on open covers]%
      {Topological volumes of fibrations:\\ A note on open covers}

\author{Clara L\"{o}h}
\address{Fakult\"{a}t f\"{u}r Mathematik, Universit\"{a}t Regensburg, Regensburg, Germany}
\email{clara.loeh@ur.de}

\author{Marco Moraschini}
\address{Fakult\"{a}t f\"{u}r Mathematik, Universit\"{a}t Regensburg, Regensburg, Germany}
\email{marco.moraschini@ur.de}


\keywords{amenable category, fibre bundles,
  simplicial volume, minimal volume entropy}
\subjclass[2020]{53C23, 18G90, 55N10}
\date{\today.\ \copyright{\ C.~L\"oh, M.~Moraschini}.
  This work was supported by the CRC~1085 \emph{Higher Invariants}
  (Universit\"at Regensburg, funded by the~DFG)}

\begin{abstract}
  We establish a straightforward estimate for the number of open sets
  with fundamental group constraints needed to cover the total space of
  fibrations. This leads to vanishing results for simplicial volume
  and minimal volume entropy, e.g., for certain mapping tori.
\end{abstract}

\maketitle

\section{Introduction}

The Lusternik-Schnirelmann category~$\lscat(X)$ of a topological
space~$X$ is the minimal number of open and in~$X$ contractible sets
necessary to cover~$X$. Despite the first applications of Lusternik-Schnirelmann category
were more focused on the study of critical points~\cite{LS-original, CorneaCritical}, 
it is now widely applied also to the study of algorithms' complexity~\cite{SmaleAlg, Vassiliev}
and topological robotics~\cite{Farber:invitation}.

Relaxing the contractibility condition leads to generalised categorical 
invariants~$\catop_\mathscr{G}$ with fundamental group constraints
(Section~\ref{sec:LS}): Let $\mathscr{G}$ be a class of groups. A \emph{$\mathscr{G}$-set} in
a space~$X$ is a subset whose path-connected components all have
$\pi_1$-image in~$\mathscr{G}$. The $\mathscr{G}$-category of~$X$ is the minimal
number~$\catop_\mathscr{G}(X)$ of open $\mathscr{G}$-sets needed to cover~$X$. 
Geometrically relevant classes~$\mathscr{G}$
are the class~$\Am$ of amenable groups or classes
of groups with controlled growth (for instance, $\Subexp_{<\delta}$).

The main observation of the present note is the following
straightforward adaption of an estimate for the
Lusternik-Schni\-rel\-mann category~\cite{varadarajan, Hardie70, James} to the
case of fundamental group constraints:

\begin{thm}[$\mathscr{G}$-category of fibrations]\label{thm:main}
  Let $p\colon E \longrightarrow B$ be a fibration with a
  path-connected base space. Let $x_0 \in B$ be a non-degenerate
  basepoint of~$B$ and let $F := p^{-1}(x_0)$ denote the fibre.  Moreover, let
  $\mathscr{G}$ be a class of groups that is closed under isomorphisms, 
  subgroups, and quotients. Then:
  \[ \catop_\mathscr{G} (E) \leq \catop_\mathscr{G} (F) \cdot \lscat (B)
  \]
\end{thm}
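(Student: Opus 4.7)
The plan is to adapt the classical Varadarajan-Hardie-James argument for the LS-category of fibrations, with a careful verification that the resulting open cover of $E$ consists of $\mathscr{G}$-sets in $E$ (and not merely of sets whose fundamental group images land in $\mathscr{G}$ inside $F$).

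I would begin by choosing a cover $U_1,\dots,U_n$ of $B$ by $n=\lscat(B)$ open sets, each contractible inside $B$. For each index~$i$, a contracting homotopy $H_i\colon U_i\times[0,1]\to B$ from the inclusion $U_i\hookrightarrow B$ to the constant map at $x_0$ lifts, by the homotopy lifting property of the fibration~$p$, to a homotopy $\widetilde H_i\colon p^{-1}(U_i)\times[0,1]\to E$ starting at the inclusion $p^{-1}(U_i)\hookrightarrow E$. Since $p\circ\widetilde H_i$ ends at the constant map~$x_0$, the endpoint produces a map $r_i\colon p^{-1}(U_i)\to F$ together with a homotopy \emph{in $E$} from the inclusion $p^{-1}(U_i)\hookrightarrow E$ to the composite $p^{-1}(U_i)\xrightarrow{r_i} F\hookrightarrow E$. (Non-degeneracy of the basepoint $x_0$ enters here in the standard way, ensuring that $F$ is well-behaved as a fibre.)

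Next, I would fix a cover of $F$ by $m=\catop_\mathscr{G}(F)$ open $\mathscr{G}$-sets $V_1,\dots,V_m$ and set $W_{i,j}:=r_i^{-1}(V_j)\subseteq p^{-1}(U_i)\subseteq E$. These are open in $E$ and jointly cover~$E$; their total number is at most $\lscat(B)\cdot\catop_\mathscr{G}(F)$. The main task is to verify that each $W_{i,j}$ is a $\mathscr{G}$-set in~$E$. For a path-component $C$ of $W_{i,j}$, the restriction $r_i|_C$ lands in a single path-component $V'$ of $V_j$, and the inclusion $C\hookrightarrow E$ is homotopic in $E$ to the composite $C\xrightarrow{r_i}V'\hookrightarrow F\hookrightarrow E$. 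Hence, up to the isomorphism of $\pi_1(E)$ induced by basepoint transport along the homotopy, the image of $\pi_1(C)\to\pi_1(E)$ is contained in the image under $\pi_1(F)\to\pi_1(E)$ of the image of $\pi_1(V')\to\pi_1(F)$. The latter lies in $\mathscr{G}$ by hypothesis on $V_j$; applying $\pi_1(F)\to\pi_1(E)$ gives a quotient of a $\mathscr{G}$-group, and finally restricting to the subgroup coming from $\pi_1(C)$ yields, by closure of $\mathscr{G}$ under isomorphisms, subgroups, and quotients, a group in $\mathscr{G}$.

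The main obstacle is exactly this $\pi_1$-transfer: the $\mathscr{G}$-set condition on $V_j$ controls only images into $\pi_1(F)$, whereas for $W_{i,j}$ we need control of images into $\pi_1(E)$. The quotient-closure hypothesis on $\mathscr{G}$, which is not required in the classical LS-case, is precisely what lets us push the $\mathscr{G}$-property forward along the fibre inclusion $F\hookrightarrow E$.
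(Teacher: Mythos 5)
Your proof is correct and follows essentially the same strategy as the paper: lift null\-homotopies of $p$ on the pieces of a categorical cover of $B$ via the homotopy lifting property, pull back a $\mathscr{G}$-cover of $F$ along the resulting retractions to $F$, and transport $\pi_1$-images along the lifted homotopy using a change-of-basepoint isomorphism. The paper factors the same argument through the slightly more general bound $\catop_\mathscr{G}(E) \le \catop_\mathscr{G}(\iota)\cdot\lscat(p)$ for the fibre inclusion $\iota$ and the map $p$ (which needs only isomorphism- and subgroup-closure of~$\mathscr{G}$), deferring quotient-closure to the separate comparison $\catop_\mathscr{G}(\iota)\le\catop_\mathscr{G}(F)$; your merged version is fine but conflates these two uses of the closure hypotheses.
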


We prove this statement in Theorem~\ref{thm:G:cover:fibrations} in
terms of categorical invariants of maps. It should be noted that the
rough estimate provided by Theorem~\ref{thm:main}, in general, cannot
be improved to~$\catop_\mathscr{G} (E) \leq \catop_\mathscr{G} (F) \cdot \catop_\mathscr{G} (B)$
(Example~\ref{rem:genmain}).

\subsection*{Applications to topological volumes}

Simplicial volume and minimal volume entropy are examples of
``topological volumes'', i.e., of $\R$-valued invariants of manifolds
that mitigate between topological properties and Riemannian volume
(Section~\ref{subsec:simvol}, Section~\ref{subsec:minvolent}).  Both
simplicial volume and minimal volume entropy admit vanishing theorems
in terms of $\pi_1$-constrained open covers of small
multiplicity. Therefore, Theorem~\ref{thm:main} gives corresponding
vanishing results for fibrations and fibre bundles.  For example:

For simplicial volume, we can combine Theorem~\ref{thm:main} with
Gromov's vanishing theorem for bounded cohomology and amenable
covers~\cite{vbc} (Theorem~\ref{grom:van:thm}):

\begin{cor}[Corollary~\ref{cor:G:cover:fibrations:vanishing:simvol}]\label{cor:mainsv}
  Let $M$ be an oriented closed connected manifold that is
  the total space of a fibre bundle~$M \to B$ with oriented
  closed connected fibre~$N$ and base~$B$. If
  \[ \amcat (N) \leq \frac{\dim (M)}{\dim(B) + 1},
  \]
  then~$\|M\| = 0$.
\end{cor}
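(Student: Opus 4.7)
The plan is to chain together three ingredients: Theorem~\ref{thm:main} applied to the class~$\Am$ of amenable groups, the classical dimension bound for Lusternik--Schnirelmann category on closed manifolds, and Gromov's vanishing theorem for simplicial volume in the presence of a small amenable cover (Theorem~\ref{grom:van:thm}). The hypotheses of Theorem~\ref{thm:main} are satisfied since $\Am$ is closed under isomorphism, passage to subgroups, and quotients, and since the closed connected manifold~$B$ admits a non-degenerate basepoint.

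First, I would apply Theorem~\ref{thm:main} with~$\mathscr{G} = \Am$ to the bundle $M \longrightarrow B$ with fibre~$N$ to obtain
\[
  \amcat(M) \leq \amcat(N) \cdot \lscat(B).
\]
Next, since $B$ is a closed connected manifold, it carries a CW-structure of dimension~$\dim(B)$, so the classical bound gives $\lscat(B) \leq \dim(B) + 1$. Combining this with the hypothesis $\amcat(N) \leq \dim(M)/(\dim(B)+1)$ yields
\[
  \amcat(M) \leq \frac{\dim(M)}{\dim(B)+1} \cdot (\dim(B)+1) = \dim(M).
\]
Thus $M$ admits an open amenable cover consisting of at most~$\dim(M)$ sets; in particular, the multiplicity of this cover is at most~$\dim(M)$.

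Finally, I would invoke Gromov's vanishing theorem (Theorem~\ref{grom:van:thm}): an oriented closed connected $n$-manifold admitting an open amenable cover of multiplicity at most~$n$ has vanishing simplicial volume. This gives $\|M\| = 0$, as claimed.

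There is no substantial obstacle here beyond bookkeeping; the argument is really just a packaging of Theorem~\ref{thm:main} into the amenable-cover hypothesis of Gromov's vanishing theorem. The only minor point worth verifying carefully is the passage from ``number of sets in the cover'' (which is what $\amcat$ counts) to ``multiplicity'' (which is what Gromov's theorem takes as input); this is automatic because the multiplicity of any cover is bounded above by its cardinality.
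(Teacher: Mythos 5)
Your proof is correct and follows essentially the same route as the paper's: apply Theorem~\ref{thm:main} with $\mathscr{G}=\Am$, bound $\lscat(B)$ by $\dim(B)+1$, and conclude via Gromov's vanishing theorem once $\amcat(M)\leq\dim(M)$ is established. The closing worry about multiplicity versus cardinality is moot here, since Theorem~\ref{grom:van:thm} in this paper is already stated in terms of $\amcat$ (cardinality); the paper handles the comparison to the multiplicity formulation in a separate remark.
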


The fibre collapsing assumption of Babenko and Sabourau can be
translated into generalised Lusternik-Schnirelmann category invariants
(Section~\ref{sec:fnca}). Therefore, combining Theorem~\ref{thm:main}
with the vanishing result of Babenko and
Sabourau~\cite[Theorem~1.3]{bsfibre} (Theorem~\ref{thm:entfca}), we
obtain:

\begin{cor}[Corollary~\ref{cor:G:cover:fibrations:vanishing:minvolent}]\label{cor:mainminvolent}
  Let $M$ be an oriented closed connected smooth manifold that is
  the total space of a fibre bundle~$M \to B$ with oriented
  closed connected smooth fibre~$N$ and base~$B$. If
  \[ \catop_{\Subexp_{<1/\dim(M)}} (N) \leq \frac{\dim (M)}{\dim(B) + 1},
  \]
  then~$\ent(M) = 0$.
\end{cor}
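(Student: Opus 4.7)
The plan is to derive this corollary by applying Theorem~\ref{thm:main} to the fibre bundle projection~$p \colon M \longrightarrow B$ with the group class~$\mathscr{G} := \Subexp_{<1/\dim(M)}$, and then combining the resulting estimate on~$\catop_{\mathscr{G}}(M)$ with the Babenko--Sabourau vanishing theorem for minimal volume entropy (Theorem~\ref{thm:entfca}).

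First, I would verify that~$\mathscr{G} = \Subexp_{<1/\dim(M)}$ satisfies the hypotheses of Theorem~\ref{thm:main}: closure under isomorphisms is immediate, while closure under subgroups and quotients follows from the fact that growth rates do not increase under taking subgroups or quotients in the finitely generated setting. Having checked this, Theorem~\ref{thm:main} yields
\[
  \catop_{\mathscr{G}}(M) \;\leq\; \catop_{\mathscr{G}}(N) \cdot \lscat(B).
\]

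Second, since $B$ is a closed topological manifold, I would invoke the standard dimensional bound~$\lscat(B) \leq \dim(B) + 1$. Combined with the hypothesis~$\catop_{\mathscr{G}}(N) \leq \dim(M)/(\dim(B)+1)$, this gives
\[
  \catop_{\mathscr{G}}(M) \;\leq\; \frac{\dim(M)}{\dim(B)+1} \cdot (\dim(B)+1) \;=\; \dim(M).
\]
Finally, I would feed this estimate into Theorem~\ref{thm:entfca} (the Babenko--Sabourau criterion, translated into the language of $\catop_{\Subexp_{<1/\dim(M)}}$ via Section~\ref{sec:fnca}), which then forces~$\ent(M) = 0$.

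The principal obstacle I anticipate is not the estimate itself but rather ensuring bookkeeping consistency at the interface with Theorem~\ref{thm:entfca}: one must confirm that the fibre-collapsing condition of Babenko and Sabourau, once reformulated in terms of~$\catop_{\Subexp_{<1/\dim(M)}}$, really is triggered by the bound~$\catop_{\mathscr{G}}(M) \leq \dim(M)$ (rather than by a strictly sharper bound involving multiplicity versus cardinality of covers). Assuming the translation in Section~\ref{sec:fnca} matches the numerics, the argument proceeds as above without further complications.
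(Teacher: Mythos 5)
Your proposal is correct and follows essentially the same route as the paper: apply Theorem~\ref{thm:main} with $\mathscr{G} = \Subexp_{<1/\dim(M)}$ (whose closure under isomorphisms, subgroups, and quotients is recorded in Section~\ref{sec:fnca} via the construction $\overline{\mathscr{G}}$ in Remark~\ref{rem:classcomplete}), combine with $\lscat(B) \leq \dim(B)+1$ to get $\catop_{\mathscr{G}}(M) \leq \dim(M)$, and then invoke the Babenko--Sabourau vanishing theorem through the FCA--category dictionary of Corollary~\ref{cor:fcacat}/Corollary~\ref{cor:entcat}. The paper packages this by first triangulating $M$, $B$, $N$ into a simplicial fibration and citing Corollary~\ref{cor:entfibrations} plus Remark~\ref{rem:entmfd}; the bookkeeping worry you raise about the $\catop$--FCA interface (passage to iterated barycentric subdivisions and the invariance $\ent(X)=\ent(X')$ from Remark~\ref{rem:entbary}) is precisely what those intermediate results resolve, so the numerics do work out.
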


In particular, Corollary~\ref{cor:mainsv} and
Corollary~\ref{cor:mainminvolent} lead to vanishing results for
certain mapping tori. In the case of simplicial volume, this
complements vanishing results of Bucher and
Neofytidis~\cite{bucherneofytidis} (Remark~\ref{rem:bn}).

Moreover, we use generalised Lusternik-Schnirelmann category
invariants to generalise a result by Bregman and
Clay~\cite[Proposition~4.1]{bregmanclay} on the fibre collapsing
assumptions and graphs of groups (Remark~\ref{rem:faccat2}) and
provide an aspherical version of examples of Babenko and
Sabourau~\cite[Theorem~1.6]{bsfibre} of simplicial complexes with
large minimal volume entropy and small ``simplicial volume''
(Proposition~\ref{prop:fncasimvol}, Corollary~\ref{cor:entsimvol}).

\subsection*{Organisation of this article}

We recall the generalised Lusternik-Schni\-rel\-mann category in
Section~\ref{sec:LS}.  The proof of Theorem~\ref{thm:main} is given in
Section~\ref{sec:fib}; applications to simplicial volume are
contained in Section~\ref{sec:appsv}.  In Section~\ref{sec:fnca}, we
recall the fibre (non-)collapsing assumption by Babenko and Sabourau;
finally, the applications to minimal volume entropy are located in
Section~\ref{sec:appminvolent}.

\subsection*{Acknowledgements}

We would like to thank George Raptis, Pietro Capo\-vil\-la, and Kevin Li for
carefully reading a previous version.

\section{Generalised LS-category}\label{sec:LS}

We recall the definition of the Lusternik-Schnirelmann category (of
spaces and maps) and the generalisation to fundamental group
constraints~\cite{Fox,James,CLOT,eilenbergganea,GGH,clm}.

\subsection{LS-Category of spaces and maps}

\begin{defi}[$\LS$-category]
  Let $X$ be a topological space. The \emph{Lusternik-Schnirelmann
    category} (or simply $\LS$-\emph{category}) of $X$, denoted
  by~$\lscat(X)$, is the minimal number~$n \in \, \N = \{0,1,\dots\}$ such that $X$
  can be covered with open sets~$U_1, \dots, U_n$ that are
  contractible in~$X$.  If such an~$n$ does not exist, we set
  $\lscat(X) := +\infty$.
\end{defi}

Similarly, we have the definition of $\LS$-category of a continuous map:

\begin{defi}[$\LS$-category of a map]
  Let $f \colon X \to Y$ be a continuous map between topological
  spaces.  The $\LS$-\emph{category} of~$f$, denoted by~$\lscat(f)$,
  is the minimal number~$n \in \, \N$ such that $X$ can be covered
  with open sets~$U_1, \dots, U_n$ such that the restriction~$f
  |_{U_i}$ is null-homotopic for each~$i \in \{1,\dots, n\}$. If such
  a number~$n$ does not exist, we set $\lscat(f) := +\infty$.
\end{defi}

\subsection{$\mathscr{G}$-Category of spaces and maps}

\begin{defi}[$\mathscr{G}$-sets (for a map)]
  Let $\mathscr{G}$ be a class of groups.
  Let $X$ be a topological space and let $U$ be a subset of~$X$. 
  We say that $U$ is a \emph{$\mathscr{G}$-set} if for all~$x \in \, U$, 
  we have
  $$
  \im \bigl( \pi_1(U \hookrightarrow X, x) \bigr) \in \, \mathscr{G}.
  $$
  An open cover $\mathcal{U}$ of $X$ is called a $\mathscr{G}$-\emph{cover} if
  each open subset in~$\mathcal{U}$ is a $\mathscr{G}$-set.

  Similarly, given a continuous map $f \colon X \to Y$ between topological spaces,
  we say that an open set~$U \subset X$ is a \emph{$\mathscr{G}$-set for~$f$} if 
  for every $x \in \, U$, we have
  $$
  \pi_1(f) \bigl( \pi_1(U, x) \bigr) \in \, \mathscr{G}.
  $$
  An open cover~$\mathcal{U}$ of~$X$ is called a $\mathscr{G}$-\emph{cover
    for}~$f$ if each open subset in~$\mathcal{U}$ is a $\mathscr{G}$-set
  for~$f$.
\end{defi}

\begin{defi}[$\mathscr{G}$-category (of a map)]
  Let $\mathscr{G}$ be a class of groups.
  Let $X$ be a topological space. The $\mathscr{G}$-\emph{category} of~$X$,
  denoted by $\catop_\mathscr{G}(X)$, is the minimal number~$n \in \, \N$ such that
  $X$ admits an open $\mathscr{G}$-cover of cardinality~$n$.  If such an~$n$ does
  not exist, we set $\catop_\mathscr{G}(X) := +\infty$.

  Similarly, the $\mathscr{G}$-\emph{category of a continuous map}~$f \colon X
  \to Y$ between topological spaces, denoted by $\catop_\mathscr{G}(f)$, is the
  minimal number~$n \in \, \mathbb{N}$ such that $X$ admits an open
  $\mathscr{G}$-cover for~$f$ of cardinality~$n$. If such an~$n$ does
  not exist, we set $\catop_\mathscr{G}(f) := +\infty$.
\end{defi}

\begin{rem}
  Let $\mathscr{G}$ be a class of groups. If $\mathscr{H}$ is a class of groups
  with~$\mathscr{H} \subset \mathscr{G}$, then $\catop_\mathscr{G} \leq \catop_\mathscr{H}$. 
  In particular: If $\mathscr{G}$ contains all trivial groups and $X$ is
  a finite-dimensional simplicial complex, then
  $\catop_\mathscr{G} (X) \leq \dim (X) + 1,
  $ 
  as can be seen by the open stars cover of the barycentric
  subdivision of~$X$ (grouped and indexed by the simplices of~$X$). 
\end{rem}

\begin{rem}\label{rem:homotopic:map:same:G:cat}
  We say that topological spaces $X$ and~$Y$ are $\pi_1$-\emph{equivalent}, if
  there exists continuous maps (called \emph{$\pi_1$-equivalences}) 
  $X \to Y$ and $Y \to X$ inducing isomorphisms on
  the level of fundamental groups (these maps are not required to
  be $\pi_1$-inverse to each other).
  
  Similarly, maps $f, g \colon X \to Y$ are said to be $\pi_1$-\emph{homotopic}
  if they induce the same homomorphism on the level of fundamental groups.

  Let $\mathscr{G}$ be a class of groups that is closed under isomorphism and
  let $X, Y$ be $\pi_1$-equivalent spaces. Then, $\gcat(X) =
  \gcat(Y)$. 
  Similarly, if $f, g \colon X \to Y$ are $\pi_1$-homotopic
  maps, then $\gcat(f) = \gcat(g)$.
\end{rem}

We collect some basic properties of the $\mathscr{G}$-category of a map,
which are known to hold in the case of $\LS$-category~\cite[Exercise~1.16]{CLOT}.

\begin{lemma}[properties of $\mathscr{G}$-category]\label{lemma:properties:G:cat:maps}
  Let $\mathscr{G}$ be a class of groups that is closed under isomorphisms, subgroups,
  and quotients. 
  Let $f \colon X \to Y$ and $g \colon Y \to Z$ be continuous maps
  between topological spaces. Then, we have the following:
  \begin{enumerate}
  \item $\gcat(f) \leq \min \{\gcat(X), \gcat(Y)\}$;
  \item $\gcat(g \circ f) \leq \min\{\gcat(g), \gcat(f)\}$;
  \item If $f$ is a homotopy equivalence, then $\gcat(f) = \gcat(X) = \gcat(Y)$.
\end{enumerate}
\end{lemma}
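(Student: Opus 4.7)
The plan is to verify the three claims by tracking how $\mathscr{G}$-covers transform under the operations in question, invoking each of the three closure hypotheses on~$\mathscr{G}$ at the appropriate moment. None of the items should present a serious obstacle; the only care required is to be systematic about which closure property (under isomorphisms, subgroups, or quotients) is used where, and the main subtlety is simply to match each cover-manipulation to the correct one.

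For item~(1), I would treat the two bounds separately. To prove $\gcat(f) \leq \gcat(X)$, I start from an open $\mathscr{G}$-cover $\{U_1,\dots,U_n\}$ of~$X$ and observe, for each $x \in U_i$, that $\pi_1(f)(\pi_1(U_i,x))$ is a homomorphic image of the subgroup $\im(\pi_1(U_i \hookrightarrow X, x)) \in \mathscr{G}$; closure under quotients then makes the same cover a $\mathscr{G}$-cover for~$f$. For $\gcat(f) \leq \gcat(Y)$, I pull back an open $\mathscr{G}$-cover $\{V_1,\dots,V_n\}$ of~$Y$ to $\{f^{-1}(V_j)\}$; for $x \in f^{-1}(V_j)$, any loop at~$x$ maps into~$V_j$, so $\pi_1(f)(\pi_1(f^{-1}(V_j),x))$ is a subgroup of $\im(\pi_1(V_j \hookrightarrow Y, f(x))) \in \mathscr{G}$, and closure under subgroups concludes.

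Item~(2) follows the same pattern. For $\gcat(g\circ f) \leq \gcat(f)$, I apply $\pi_1(g)$ to each group $\pi_1(f)(\pi_1(U_i,x))$ coming from a $\mathscr{G}$-cover $\{U_i\}$ for~$f$; the outcome is a quotient of a group in~$\mathscr{G}$ and therefore stays in~$\mathscr{G}$. For $\gcat(g\circ f) \leq \gcat(g)$, I pull back a $\mathscr{G}$-cover $\{V_j\}$ for~$g$ along~$f$ and observe, via the same factorisation through~$V_j$, that $\pi_1(g\circ f)(\pi_1(f^{-1}(V_j),x))$ is a subgroup of $\pi_1(g)(\pi_1(V_j,f(x))) \in \mathscr{G}$; one more appeal to closure under subgroups finishes the argument.

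For item~(3), I would first invoke Remark~\ref{rem:homotopic:map:same:G:cat} to obtain $\gcat(X) = \gcat(Y)$, since a homotopy equivalence is in particular a $\pi_1$-equivalence. The bound $\gcat(f) \leq \gcat(X)$ is already given by item~(1), so only $\gcat(X) \leq \gcat(f)$ remains. Given a $\mathscr{G}$-cover $\{U_i\}$ for~$f$, the fact that $\pi_1(f)$ is an isomorphism identifies $\im(\pi_1(U_i \hookrightarrow X, x))$ with $\pi_1(f)(\pi_1(U_i, x)) \in \mathscr{G}$, so closure under isomorphism turns the same cover into a $\mathscr{G}$-cover of~$X$.
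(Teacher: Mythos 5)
Your proof is correct. Items~(1) and~(2) follow essentially the same route as the paper: bound~$\gcat(f)\leq\gcat(X)$ by noting that $\pi_1(f)(\pi_1(U,x))$ is a quotient of $\im\bigl(\pi_1(U\hookrightarrow X,x)\bigr)$ and invoking closure under quotients; bound~$\gcat(f)\leq\gcat(Y)$ (and $\gcat(g\circ f)\leq\gcat(g)$) by pulling back the cover along~$f$ and invoking closure under subgroups; bound~$\gcat(g\circ f)\leq\gcat(f)$ by postcomposing with~$\pi_1(g)$ and invoking closure under quotients again.

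Where you diverge from the paper is item~(3). The paper proves $\gcat(X)\leq\gcat(f)$ indirectly: choosing a homotopy inverse~$g$ of~$f$, it runs the chain
\[
\gcat(X)=\gcat(\id_X)=\gcat(g\circ f)\leq\min\{\gcat(f),\gcat(g)\}\leq\max\{\gcat(f),\gcat(g)\}\leq\gcat(X),
\]
where the last step uses items~(1) and~(2) plus the homotopy-invariance remark. All inequalities collapse, giving $\gcat(f)=\gcat(X)$. You instead prove $\gcat(X)\leq\gcat(f)$ directly: starting from a $\mathscr{G}$-cover for~$f$, you observe that $\pi_1(f)$ is injective on $\pi_1(X,x)$, so it restricts to an isomorphism from $\im\bigl(\pi_1(U_i\hookrightarrow X,x)\bigr)$ onto $\pi_1(f)(\pi_1(U_i,x))\in\mathscr{G}$; closure under isomorphisms then shows the same cover is a $\mathscr{G}$-cover of~$X$. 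Both are valid. Your argument is a bit more hands-on and in fact yields the slightly stronger statement that $\gcat(f)=\gcat(X)$ whenever $\pi_1(f)$ is injective at all basepoints; the paper's version is more formal and reuses items~(1) and~(2) rather than re-examining fundamental groups, but requires a genuine homotopy inverse.
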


\begin{proof}
\emph{Ad.~1.} Let $\mathcal{U}$ be an open $\mathscr{G}$-cover of $X$.
Then, for each $U \in \, \mathcal{U}$ and every~$x \in \, U$, we have
the following commutative diagram
\[
\xymatrix{
\pi_1(U, x) \ar[d] \ar[rr]^-{\pi_1(f |_U)} && \pi_1\bigl(Y, f(x)\bigr)  \ar@{=}[d] \\
\pi_1(X, x) \ar[rr]_-{\pi_1(f)} && \pi_1\bigl(Y, f(x)\bigr),
}
\]
where the left vertical arrow is induced by the inclusion.  Because
$U$ is a $\mathscr{G}$-set in~$X$ and $\mathscr{G}$ is closed under quotients, the
previous diagram shows that $U$ is a $\mathscr{G}$-set for~$f$. Therefore,
$\mathcal{U}$ is an open $\mathscr{G}$-cover for~$f$.  Taking the infimum over
all open $\mathscr{G}$-covers of~$X$ shows that $\gcat(f) \leq \gcat(X)$.

On the other hand, if $\mathcal{V}$ is an open $\mathscr{G}$-cover of $Y$, then
$(f^{-1}(V))_{V \in \, \mathcal{V}}$ is an open $\mathscr{G}$-cover
for~$f$. Hence, we get $\gcat(f) \leq \gcat(Y)$.

\emph{Ad.~2.} As $\mathscr{G}$ is closed under taking quotients, it is immediate
to check that $\gcat(g \circ f) \leq \gcat(f)$.

Moreover, if $\mathcal{U}$ is an open $\mathscr{G}$-cover for $g$ of~$Y$, we can consider 
the pullback $f^{-1}\mathcal{U}$, which is an open $\mathscr{G}$-cover for~$g \circ f$.
Therefore, $\gcat(g \circ f ) \leq \gcat(g)$.

\emph{Ad.~3.} Because $X$ and $Y$ are homotopy equivalent, we have
$\gcat(X) = \gcat(Y)$ (Remark~\ref{rem:homotopic:map:same:G:cat}). 
Let $f \colon X \to Y$ be a homotopy equivalence and let $g \colon Y
\to X$ be a homotopy inverse. Then, $g \circ f$ is homotopic to
$\id_X$. Then Remark~\ref{rem:homotopic:map:same:G:cat} and the first
two parts show that
\begin{align*}
  \gcat(X)
  & = \gcat(\id_X) = \gcat(g \circ f) \\
  & \leq \min\{\gcat(f), \gcat(g)\}
  \leq \max\{\gcat(f), \gcat(g)\}
  \leq \gcat(X).
\end{align*}
This shows that $\gcat(Y) = \gcat(X) = \gcat(f)$.
\end{proof}

\section{Generalised LS-category and fibrations}\label{sec:fib}

In this section, we prove Theorem~\ref{thm:main}. Indeed,
Lemma~\ref{lemma:properties:G:cat:maps} shows that
Theorem~\ref{thm:main} is a consequence of the following statement:

\begin{thm}[$\mathscr{G}$-category of fibrations]\label{thm:G:cover:fibrations}
  Let $p\colon E \longrightarrow B$ be a fibration with a
  path-connected base space. Let $x_0 \in B$ be a non-degenerate
  basepoint of~$B$, let $F := p^{-1}(x_0)$, and let $\iota \colon F
  \hookrightarrow E$ denote the inclusion of the fibre.  Moreover, let
  $\mathscr{G}$ be a class of groups that is closed under isomorphisms and
  subgroups. Then:
  \[ \catop_\mathscr{G} (E) \leq \catop_\mathscr{G} (\iota) \cdot \lscat (p)
  \]
\end{thm}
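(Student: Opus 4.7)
\emph{Proof sketch.} I would follow the Varadarajan--Hardie--James pattern for the classical bound $\lscat(E) \le \lscat(F)\cdot\lscat(B)$, pulling back a $\mathscr{G}$-cover of~$F$ through suitable ``fibre retractions''. Fix an open cover $U_1,\dots,U_n$ of~$E$ with $n = \lscat(p)$ and each $p|_{U_i}$ null-homotopic, and an open $\mathscr{G}$-cover $A_1,\dots,A_m$ of~$F$ for~$\iota$ with $m = \gcat(\iota)$. The goal is to exhibit an open $\mathscr{G}$-cover of~$E$ of cardinality~$nm$.

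For each~$i$, I first produce a map $r_i \colon U_i \to F$ such that $\iota\circ r_i$ is homotopic in~$E$ to the inclusion $U_i \hookrightarrow E$. The null-homotopy of $p|_{U_i}$ is a homotopy in~$B$ to the constant map at some point $b_i\in B$; concatenating with a path from~$b_i$ to~$x_0$ (available because $B$ is path-connected) yields a homotopy $h^{(i)}\colon U_i\times I\to B$ from $p|_{U_i}$ to the constant map at~$x_0$. Applying the homotopy lifting property of~$p$ with initial value equal to the inclusion produces a lift $\tilde h^{(i)}\colon U_i\times I\to E$ whose time-$1$ map takes values in $p^{-1}(x_0)=F$; set $r_i:=\tilde h^{(i)}_1$. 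Non-degeneracy of~$x_0$ enters only if one wishes to identify this lift via a canonical fibre-translation picture.

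I then define $W_{ij}:=r_i^{-1}(A_j)\subseteq U_i$, open in~$E$; since the $A_j$ cover~$F$ and the $U_i$ cover~$E$, the family $\{W_{ij}\}_{i,j}$ is an open cover of~$E$ of size~$nm$. The crux is checking that each $W_{ij}$ is a $\mathscr{G}$-set in~$E$. For $w\in W_{ij}$, the restriction $\tilde h^{(i)}|_{W_{ij}\times I}$ is a homotopy in~$E$ from the inclusion $\alpha\colon W_{ij}\hookrightarrow E$ to the composite $\beta\colon W_{ij}\xrightarrow{r_i} A_j\xrightarrow{\iota|_{A_j}} E$; writing $\gamma$ for the track $t\mapsto\tilde h^{(i)}(w,t)$ from~$w$ to~$r_i(w)$, the standard change-of-basepoint identity says that conjugation by~$\gamma$ carries $\im\pi_1(\alpha)\subseteq\pi_1(E,w)$ isomorphically onto $\im\pi_1(\beta)\subseteq\pi_1(E,r_i(w))$. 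But $\im\pi_1(\beta)$ is a subgroup of $\im\pi_1(\iota|_{A_j})$, which lies in~$\mathscr{G}$ by choice of~$A_j$. Closure of~$\mathscr{G}$ under subgroups and isomorphisms then forces $\im\pi_1(\alpha)\in\mathscr{G}$, so~$W_{ij}$ is a $\mathscr{G}$-set and $\gcat(E)\le nm$.

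The one delicate piece is the bookkeeping in this final step: keeping track of how~$\gamma$ conjugates the two images and observing that~$\beta$ factors through~$A_j$ precisely because~$W_{ij}$ was defined as $r_i^{-1}(A_j)$. The rest is a straightforward HLP-plus-pullback argument in which the two closure hypotheses on~$\mathscr{G}$ are each used exactly once.
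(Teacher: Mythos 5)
Your argument is correct and follows essentially the same route as the paper: take an LS-cover of $E$ for~$p$, use the null-homotopies and the HLP to produce fibre-retraction maps $r_i \colon U_i \to F$, pull back a $\mathscr{G}$-cover of~$F$ for~$\iota$ through the~$r_i$, and verify the $\mathscr{G}$-condition via change-of-basepoint conjugation along the track of the lifted homotopy. Your side remark that path-connectedness of~$B$ alone already yields the homotopy of~$p|_{U_i}$ to~$\const_{x_0}$ is a fair observation; the paper attributes this step to non-degeneracy of~$x_0$, but your concatenation-with-a-path construction does not require it.
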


Recall that a basepoint~$x_0$ of a space~$B$ is \emph{non-degenerate}
if the inclusion~$\{x_0\} \to B$ is a cofibration.

\begin{proof} 
  Let $n := \catop_\mathscr{G} (\iota)$ and $b := \lscat (p)$; without loss of
  generality, we may assume that they are both finite.
  Let $(V_i)_{i \in [n]}$ and $(W_j)_{j \in [b]}$ be corresponding open
  covers of~$F$ and $E$, respectively; here, for~$k \in \N$, we
  abbreviate~$[k] := \{1,\dots,k\}$. 
  We construct an open $\mathscr{G}$-cover~$(U_{ij})_{i\in [n],j\in[b]}$ of~$E$
  as follows:

  Let $j \in [b]$. As $W_j$ is an LS-set for~$p$
  (i.e., $p|_{W_j} \colon W_j \to B$ is null-homotopic) and
  as the basepoint~$x_0 \in B$ is non-degenerate, there exists
  a homotopy~$h_j \colon W_j \times [0,1] \longrightarrow B$
  with~$h_j(\args,0) = p|_{W_j}$ and $h_j(\args,1) = \const_{x_0}$.
  By the homotopy lifting property, there exists a
  homotopy~$\widetilde h_j \colon W_j \times [0,1] \longrightarrow E$
  with
  $p \circ \widetilde h_j = h_j.
  $
  In particular,
  \[ \widetilde h_j(\args,1) (W_j) \subset p^{-1}(x_0)= F.
  \]
  We write~$g_j := \widetilde h_j(\args,1) \colon W_j \longrightarrow F$.

  For all~$i \in [n]$ and all~$j \in [b]$, we set
  \[ U_{ij} := g_j^{-1}(V_i) \subset E. 
  \]
  By construction, $U_{ij}$ is open in~$E$ and $\bigcup_{(i,j) \in [n]
    \times [b]} U_{ij} = E$.

  It remains to show that each~$U_{ij}$ is a $\mathscr{G}$-set.  Let $U \subset
  U_{ij}$ be a path-component of~$U_{ij}$, let $i_U \colon U
  \hookrightarrow E$ be the inclusion and let $y_0 \in U$.

  We consider the map
  \[ k := \widetilde h_j |_{U \times [0,1]}
     \colon U \times [0,1] \longrightarrow E.
  \]
  Then $k(\args,0) = i_U$. Moreover, let $y_1 := k(y_0,1)$ and let $
  \alpha \colon [0,1] \longrightarrow E,\ t \longmapsto k(y_0,t)$.
  Then, we obtain the corresponding change of basepoints isomorphism
  \begin{align*}
    \alpha_* \colon \pi_1(E,y_0) & \longrightarrow \pi_1(E,y_1)\\
    [\gamma] & \longmapsto [\alpha^{-1} * \gamma * \alpha].
  \end{align*}
  By construction,
  \[ \alpha_* \circ \pi_1(i_U) = \pi_1\bigl(k(\args,1)\bigr)
  \colon \pi_1(U,y_0) \longrightarrow \pi_1(U,y_1).
  \]
  Because $\alpha_*$ is an isomorphism and $\mathscr{G}$ is closed under
  isomorphisms, it suffices to show that $\Lambda :=
  \pi_1(k(\args,1))(\pi_1(U,y_0))$ lies in~$\mathscr{G}$.  The commutative
  diagram
  \[ \xymatrix{%
    U \ar[r]^-{k(\args,1)} \ar[d]_{\txt{incl}}
    &
    E
    \\
    U_{ij} \ar[r]_{g_j|_{U_{ij}}}
    & F \ar[u]_-{\iota}
    }
  \]
  shows that
  $\Lambda$ is a subgroup of~$\pi_1(\iota) (\pi_1(V_i, g_j(y_0))$. The latter
  group is in~$\mathscr{G}$ as $V_i$ is a $\mathscr{G}$-set for~$\iota$. Because $\mathscr{G}$ is closed
  under subgroups, we obtain~$\Lambda \in \mathscr{G}$.

  Therefore, $(U_{ij})_{i \in [n], j \in [b]}$ is an open $\mathscr{G}$-cover of~$E$
  and so~$\catop_\mathscr{G} E \leq n \cdot b$.
\end{proof}

\begin{rem}\label{rem:equal:cat:cat:amenable}
  Let $p \colon E \to B$ be a fibration with fibre~$F$
  over the basepoint~$x_0 \in B$ and let $i \colon F \hookrightarrow E$
  be the inclusion. 
  Let $\mathscr{G}$ be a class of groups that is closed under isomorphisms,
  subgroups, quotients, and under extensions by Abelian kernels; e.g., $\mathscr{G} = \Am$.
  Then $\catop_\mathscr{G} (i) = \catop_\mathscr{G}(F)$:
  By Lemma~\ref{lemma:properties:G:cat:maps}, we already know that
  $\catop_\mathscr{G} (i) \leq \catop_\mathscr{G}(F)$. On the other hand, the long exact
  sequence for~$p$ and the closure properties of~$\mathscr{G}$ show that
  $\catop_\mathscr{G}(F) \leq \catop_\mathscr{G}(i)$.
\end{rem}

\begin{cor}[$\mathscr{G}$-category of mapping tori]\label{cor:vanishing:mapping:tori:G}
  Let $\mathscr{G}$ be a class of groups that is closed under isomorphisms and subgroups.
  Let $M$ be a closed connected manifold that admits a fibre bundle~$p \colon
  M\longrightarrow S^1$ with manifold fibre~$i \colon N \hookrightarrow M$. If
  $2 \cdot \catop_\mathscr{G} (i) \leq \dim N +1$, then
  \[ \catop_\mathscr{G} (M) \leq \dim M.
  \]
\end{cor}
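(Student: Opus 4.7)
The plan is to apply Theorem~\ref{thm:G:cover:fibrations} directly to the given fibre bundle~$p \colon M \to S^1$, using the specific geometry of the base~$S^1$ and the dimension count for a fibre bundle.

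First I would reduce the bound $\catop_\mathscr{G}(i) \cdot \lscat(p)$ to something involving the base. By Lemma~\ref{lemma:properties:G:cat:maps}(1) applied to $p$, we have $\lscat(p) \leq \lscat(S^1)$. It is standard (and easily verified by exhibiting two open arcs covering $S^1$, each contractible in itself and hence in $S^1$, while noting $S^1$ is not itself contractible) that $\lscat(S^1) = 2$. Therefore $\lscat(p) \leq 2$.

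Next I would combine this with Theorem~\ref{thm:G:cover:fibrations}: since the inclusion $\{x_0\} \hookrightarrow S^1$ is a cofibration (the basepoint of $S^1$ is non-degenerate), the theorem yields
\[
\catop_\mathscr{G}(M) \leq \catop_\mathscr{G}(i) \cdot \lscat(p) \leq 2 \cdot \catop_\mathscr{G}(i).
\]
Finally, I would use the dimension relation: because $p \colon M \to S^1$ is a fibre bundle with manifold fibre~$N$, we have $\dim M = \dim N + \dim S^1 = \dim N + 1$. The hypothesis $2 \cdot \catop_\mathscr{G}(i) \leq \dim N + 1$ then gives
\[
\catop_\mathscr{G}(M) \leq 2 \cdot \catop_\mathscr{G}(i) \leq \dim N + 1 = \dim M,
\]
which is the desired inequality.

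There is no serious obstacle here: the statement is a direct bookkeeping consequence of Theorem~\ref{thm:G:cover:fibrations}, the monotonicity property from Lemma~\ref{lemma:properties:G:cat:maps}(1), and the two small computations $\lscat(S^1) = 2$ and $\dim M = \dim N + 1$. The only point that deserves a brief mention in the write-up is the use of $\lscat(p) \leq \lscat(B)$ rather than working with $\lscat(S^1)$ directly, which avoids having to reprove a version of the categorical estimate for maps.
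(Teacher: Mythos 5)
Your proof is correct and takes essentially the same route as the paper's: bound $\lscat(p)$ by $\lscat(S^1) = 2$, apply Theorem~\ref{thm:G:cover:fibrations}, and finish with the dimension count $\dim M = \dim N + 1$. One small imprecision worth fixing: Lemma~\ref{lemma:properties:G:cat:maps}(1) is stated for the $\mathscr{G}$-category $\gcat$, not for $\lscat$ (LS-sets are required to be contractible in the ambient space, which is a stronger condition than having trivial $\pi_1$-image), so it does not literally yield $\lscat(p) \leq \lscat(S^1)$; that inequality is the classical LS-analogue for maps, which the paper invokes as a known fact and which follows by pulling back a cover of $S^1$ by sets contractible in $S^1$ and observing that $p$ restricted to each preimage factors through the corresponding contraction.
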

 \begin{proof}
   Because $\lscat(S^1) = 2$, we have $\lscat(p) \leq 2$. 
   Applying Theorem~\ref{thm:G:cover:fibrations} and the hypothesis
   on the $\mathscr{G}$-category of~$i$, we obtain
   \[
   \gcat(M)
   \leq  \lscat(p) \cdot \catop_\mathscr{G}(i)
   \leq 2 \cdot \catop_\mathscr{G}(i)
   \leq \dim(N) + 1
   = \dim(M).
   \qedhere
   \]
\end{proof}

\section{Applications to simplicial volume}\label{sec:appsv}

We recall the definition of simplicial volume and bounded cohomology
(Section~\ref{subsec:simvol}) and Gromov's vanishing theorem (Section~\ref{subsec:simvolvan}).
In Section~\ref{subsec:simvolfib}, we derive the vanishing results for
simplicial volume and fibrations.

\subsection{Bounded cohomology and simplicial volume}\label{subsec:simvol}

We briefly recall the definition of bounded cohomology for spaces and
of simplicial volume. The systematic use of these invariants in
geometry was initiated by Gromov~\cite{vbc}. Simplicial volume
measures manifolds in terms of singular chains.

Given a topological space~$X$, we denote the real singular chain complex
by~$(C_\bullet(X; \R), \partial_\bullet)$ and the real singular cochain complex
by~$(C^\bullet(X; \R), \delta^\bullet)$. These complexes are endowed with
norms:
For a singular $n$-chain~$c = \sum_{i = 1}^k \alpha_i \cdot \sigma_i
\in \, C^n(X; \R)$ in reduced form, we define the $\ell^1$-\emph{norm} by:
\[
  |c|_1 := \sum_{i = 1}^k |\alpha_i|.
\]
Similarly, we endow~$C^\bullet(X; \R)$ with the $\ell^\infty$-\emph{norm}
given by 
\[
  |\varphi|_\infty
  := \sup \bigl\{ |\varphi(\sigma)| \bigm| \sigma \mbox{ is a singular $n$-simplex in $X$}\bigr\}
  \in \R_{\geq 0 } \cup \{\infty\}
\]
for all $\varphi \in \, C^n(X; \R)$. As the coboundary
operator~$\delta^\bullet$ maps cochains of finite norm to cochains of
finite norm, we obtain the subcomplex~$(C_b^\bullet(X; \R), \delta^\bullet)$
of singular cochains~$\varphi$ with~$|\varphi|_\infty < \infty$.

\begin{defi}[simplicial volume]
  Let $M$ be an oriented closed connected $n$-dimensional manifold. We
  define the \emph{simplicial volume} of~$M$ to be
  \[
  \sv{M} := 
  \inf \bigl\{ |c|_1 \bigm| \mbox{$c \in C_n(M;\R)$ is a cycle representing $[M]$} \bigr\},
  \]
  where $[M] \in \, H_n(M; \R)$ denotes the fundamental class of~$M$.
\end{defi}

\begin{rem}
  More generally, the $\ell^1$-norm on the singular chain complex
  induces a semi-norm~$\|\args\|_1$ on the whole singular homology
  with $\R$-coefficients.
\end{rem}

A useful tool for detecting positivity/vanishing of
simplicial volume is bounded cohomology:

\begin{defi}[bounded cohomology]
  Bounded cohomology of spaces is the functor~$H_b^\bullet(\args;\R)
  := H^\bullet(C^\bullet_b(\args;\R))$.
\end{defi}

The connection between bounded cohomology and simplicial volume
is expressed in terms of the \emph{comparison map} 
$
\comp^\bullet \colon H^\bullet_b(\args; \R) \to H^\bullet(\args;\R),
$ 
induced by the inclusion~$C_b^\bullet(\args;\R) \hookrightarrow C^\bullet(\args;\R)$.

\begin{prop}[duality principle]\label{prop:duality:principle}
  Let $M$ be an oriented closed connected $n$-manifold. 
  Then:
  \[
  \sv M > 0 \Longleftrightarrow \comp_M^n \mbox{ is surjective}
  \]
\end{prop}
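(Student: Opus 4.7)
The plan is to establish the duality principle via Hahn--Banach, identifying $\sv{M}$ with the reciprocal of an $\ell^\infty$-seminorm on the cohomological fundamental class. I would set things up as follows.

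First, I would introduce the dual seminorm on cohomology: for $\alpha \in H^n(M;\R)$, set
\[
 \|\alpha\|_\infty := \inf \bigl\{ |\varphi|_\infty \bigm| \varphi \in C_b^n(M;\R),\ \delta^n\varphi=0,\ [\varphi] = \alpha \in H^n(M;\R) \bigr\},
\]
with the convention that $\|\alpha\|_\infty = +\infty$ if $\alpha \notin \im(\comp_M^n)$. Because $M$ is oriented closed connected of dimension $n$, we have $H^n(M;\R) \cong \R$, and there is a unique cohomological fundamental class $[M]^\ast \in H^n(M;\R)$ with $\langle [M]^\ast, [M]\rangle = 1$. Since $[M]^\ast$ generates $H^n(M;\R)$, the comparison map $\comp_M^n$ is surjective if and only if $[M]^\ast \in \im(\comp_M^n)$, i.e., if and only if $\|[M]^\ast\|_\infty < +\infty$.

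The core step is then the duality formula
\[
 \sv{M} \cdot \|[M]^\ast\|_\infty = 1,
\]
with the convention $0 \cdot \infty = 1$ interpreted as: $\sv{M} = 0$ iff $\|[M]^\ast\|_\infty = +\infty$. To prove it, I would first observe the easy inequality: if $c = \sum \alpha_i \sigma_i$ is a fundamental cycle and $\varphi$ is a bounded cocycle representing $[M]^\ast$, then $1 = \langle \varphi, c\rangle \leq |\varphi|_\infty \cdot |c|_1$, so $\sv{M} \cdot \|[M]^\ast\|_\infty \geq 1$. For the reverse inequality I would apply Hahn--Banach: the closed subspace $B_n(M;\R) \subset C_n(M;\R)$ of boundaries does not meet $\{z \in Z_n(M;\R) : [z] = [M],\ |z|_1 < \sv{M} + \varepsilon\}$ in a way that lets us separate $[M]$ from $0$ by a bounded linear functional $\psi$ on $C_n(M;\R)$ vanishing on boundaries, hence a cocycle, with $\psi([M]) = 1$ and $|\psi|_\infty \leq 1/(\sv{M} - \varepsilon)$ (after rescaling). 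The resulting $\psi$ is bounded, its class in $H^n(M;\R)$ equals $[M]^\ast$ (as it pairs to $1$ with $[M]$), and it witnesses $\|[M]^\ast\|_\infty \leq 1/\sv{M}$ when $\sv{M}>0$; when $\sv{M}=0$, no such bounded representative can exist, forcing $\|[M]^\ast\|_\infty = +\infty$.

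Combining the two: $\sv{M} > 0$ if and only if $\|[M]^\ast\|_\infty < +\infty$ if and only if $[M]^\ast \in \im(\comp_M^n)$ if and only if $\comp_M^n$ is surjective. The main obstacle is to execute the Hahn--Banach separation cleanly in the non-closed setting (the space of representatives of $[M]$ is an affine subspace of $Z_n(M;\R)$, which is not closed in the $\ell^1$-topology a priori); the standard workaround is to separate $[M]$ from $0$ in the quotient $C_n(M;\R)/B_n(M;\R)$ equipped with the quotient seminorm, or equivalently to apply the Hahn--Banach extension theorem to the linear functional defined on $\R \cdot [M] \oplus B_n(M;\R)$ that sends $[M] \mapsto 1$ and annihilates boundaries.
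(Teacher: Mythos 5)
The paper cites this as Gromov's classical duality principle and gives no proof of its own, so there is nothing internal to compare against. Your Hahn--Banach argument is the standard one and is correct in substance: you reduce surjectivity of $\comp_M^n$ to finiteness of $\|[M]^\ast\|_\infty$ using that $H^n(M;\R) \cong \R$ is generated by $[M]^\ast$, you get the easy inequality $\|[M]^\ast\|_\infty \cdot \sv{M} \geq 1$ from the Kronecker pairing (which also shows $\sv M = 0 \Rightarrow \|[M]^\ast\|_\infty = \infty$), and you produce a bounded representative of $[M]^\ast$ by Hahn--Banach when $\sv M > 0$.

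Two corrections are in order. First, $B_n(M;\R)$ is generally \emph{not} $\ell^1$-closed in $C_n(M;\R)$; indeed whenever $\sv M = 0$ and $[M]\neq 0$ the $\ell^1$-seminorm on $H_n(M;\R)$ degenerates precisely because $B_n$ fails to be closed. So the geometric separation form of Hahn--Banach is not the right tool, and the sentence asserting closedness is false. But this does not damage the proof, because, as you observe at the end, the \emph{extension} form of Hahn--Banach requires no closedness: fix a fundamental cycle $z_0$, define the functional $\lambda z_0 + b \mapsto \lambda$ on the algebraic subspace $\R z_0 \oplus B_n(M;\R)$, note that $|\lambda z_0 + b|_1 = |\lambda|\,|z_0 + \lambda^{-1}b|_1 \geq |\lambda|\cdot\sv{M}$ so this functional has operator norm $\leq 1/\sv{M}$ there, extend to all of $C_n(M;\R)$, and observe the extension vanishes on $\im \partial_{n+1}$ hence is a bounded cocycle pairing to $1$ with $[M]$. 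Second, a notational slip: $\psi$ is a functional on chains, so write $\psi(z_0) = 1$, not $\psi([M]) = 1$.
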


\begin{rem}\label{rem:possimvol}
  By now, many examples of manifolds with non-zero simplicial volume
  are known. We list some of them:
  \begin{itemize}
  \item oriented closed connected hyperbolic manifolds~\cite{Thurston,
    vbc}; in particular, surfaces of genus~$\geq 2$;
  \item more generally: oriented closed connected, rationally
    essential manifolds of dimension~$\geq 2$ with non-elementary
    hyperbolic fundamental group~\cite[mapping
      theorem]{vbc}\cite{mineyev};
  \item oriented closed connected locally symmetric spaces of
    non-compact type~\cite{Bucher:lss, Lafont-Schmidt};
  \item manifolds with sufficiently negative curvature~\cite{inoueyano,
    connellwang};
  \item
    The class of manifolds with positive simplicial volume is closed
    with respect to connected sums and products~\cite{vbc}. 
  \end{itemize}
\end{rem}

\subsection{Gromov's vanishing theorem}\label{subsec:simvolvan}

A classical application of the duality principle
(Proposition~\ref{prop:duality:principle}) is to show that the
simplicial volume of all oriented closed connected manifolds with
amenable fundamental group (and non-zero dimension) is zero. This is a
consequence of the vanishing of the bounded cohomology for spaces with
amenable fundamental group~\cite{vbc, Ivanov, FM:Grom}. 
More generally, one has~\cite{vbc, Ivanov, FM:Grom, LS,
  Ivanov:covers}:

\begin{thm}[Gromov's vanishing theorem]\label{grom:van:thm}
  Let $X$ be a topological space. Then: 
\begin{enumerate}
\item the map~$\comp_X^s \colon H^s_b(X) \to H^s(X)$ is zero for all $s \geq \amcat(X)$;
\item we have~$\sv{\alpha}_1 = 0$ for all $\alpha \in H_s(X; \R)$ with~$s \geq \amcat(X)$.
\end{enumerate}  
\end{thm}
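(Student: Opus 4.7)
The plan is to reduce part~(2) to part~(1) via the standard duality between the $\ell^1$-semi-norm and bounded cohomology, and then to prove part~(1) by combining a nerve construction with an averaging procedure that exploits amenability. For part~(2), the duality underlying Proposition~\ref{prop:duality:principle} provides, for every~$\alpha \in H_s(X; \R)$, the formula
\[
  \sv{\alpha}
  = \sup\bigl\{ \langle \comp_X^s(\beta), \alpha \rangle
    \bigm| \beta \in H_b^s(X; \R),\ |\beta|_\infty \leq 1 \bigr\},
\]
so once part~(1) is known, the right-hand side vanishes for~$s \geq \amcat(X)$ and part~(2) follows at once.

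For part~(1), fix~$s \geq n := \amcat(X)$, an open amenable cover~$\mathcal{U} = (U_1, \dots, U_n)$ of~$X$ of cardinality~$n$, and a class~$\beta \in H_b^s(X; \R)$. I would first choose a partition of unity subordinate to~$\mathcal{U}$ and pass to the associated nerve~$\mathcal{N}(\mathcal{U})$, together with the tautological continuous map~$\pi \colon X \to |\mathcal{N}(\mathcal{U})|$. Since~$|\mathcal{N}(\mathcal{U})|$ is a simplicial complex of dimension at most~$n - 1 < s$, its ordinary cohomology satisfies~$H^s(|\mathcal{N}(\mathcal{U})|; \R) = 0$. The aim is then to exhibit a factorisation~$\comp_X^s = \pi^* \circ \psi$ for some~$\psi \colon H_b^s(X; \R) \to H^s(|\mathcal{N}(\mathcal{U})|; \R)$; together with the dimension estimate this immediately yields~$\comp_X^s(\beta) = 0$.

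The hard part is the construction of this factorisation, which is the genuine content of Gromov's theorem. The strategy is to exploit the amenability of each image~$\im \pi_1(U_i \hookrightarrow X, x_i)$: every such subgroup carries an invariant mean, and one uses these means to average a bounded cocycle representative of~$\beta$ along the fibres of a suitable simplicial approximation of~$\pi$. This produces a cohomologous bounded cochain that is pulled back from~$|\mathcal{N}(\mathcal{U})|$. The averaging must be carried out compatibly across all sets of the cover simultaneously, while keeping the~$\ell^\infty$-norm finite at every stage; I expect this simultaneous control to be the main technical obstacle. Gromov's multicomplex machinery, or equivalently Ivanov's resolution-theoretic reformulation, is designed precisely to handle this bookkeeping, and once the factorisation is in place the theorem follows.
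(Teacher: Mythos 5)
The paper does not actually prove this theorem: it is quoted from the literature (Gromov's \emph{Volume and bounded cohomology} and the subsequent treatments by Ivanov, Frigerio--Moraschini, and L\"oh--Sauer), and the only commentary the paper adds is the remark that follows, on translating between the multiplicity formulation used in those sources and the cardinality formulation~($\amcat$) used here. Your outline is a faithful sketch of what those references supply. The reduction of part~(2) to part~(1) via Hahn--Banach duality is correct, since the Kronecker pairing $H^s_b(X;\R) \times H_s(X;\R) \to \R$ factors through $\comp^s_X$, so vanishing of the comparison map forces $\|\alpha\|_1 = 0$. For part~(1), your plan --- take the nerve of an amenable cover of cardinality $n = \amcat(X)$, observe that $\dim|\mathcal{N}(\mathcal{U})| \le n-1 < s$, and factor $\comp^s_X$ through $H^s(|\mathcal{N}(\mathcal{U})|;\R)$ --- is the right shape, and you are right that producing this factorisation is the genuine content. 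The caveat you raise is exactly the right one: invariant means taken separately on the images $\im\pi_1(U_i\hookrightarrow X)$ do not patch across overlaps for free, and making them cohere is precisely what Gromov's multicomplex machinery, Ivanov's relatively injective resolutions, or the amenable nerve/acyclic-cover method of L\"oh--Sauer and Frigerio--Moraschini are designed to handle. So as a plan your proposal is correct, but it leaves the theorem where the paper itself leaves it: delegated to the cited references.
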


\begin{rem}
  Usually, the vanishing theorem is stated in terms of multiplicity of
  the cover instead of cardinality. For CW-complexes, the two
  formulations are indeed equivalent~\cite[Remark~3.13]{clm}.
\end{rem}

\subsection{Vanishing results for fibrations}\label{subsec:simvolfib} 

We apply Theorem~\ref{thm:main} and
Theorem~\ref{rem:equal:cat:cat:amenable} in the case of the
class~$\Am$ of amenable groups to obtain vanishing results for the
comparison map and simplicial volume.  The class~$\Am$ is closed under
subgroups, isomorphisms, quotients, and extension by Abelian groups.

\begin{cor}[vanishing result for fibrations]\label{cor:compfib}
  Let $p\colon E \longrightarrow B$ be a fibration with a
  path-connected base space. Let $x_0 \in B$ be a non-degenerate
  basepoint of~$B$ and let $F := p^{-1}(x_0)$. Moreover, let
  $s \geq \amcat (F) \cdot \lscat(p)$. Then:
  \begin{enumerate}
  \item the comparison map
    $
    \comp_E^s \colon H^s_b(E) \to H^s(E)
    $  is zero;
  \item all classes $\alpha \in \, H_s(E; \R)$ have vanishing $\ell^1$-seminorm $\sv{\alpha}_1 = 0$.
  \end{enumerate}
\end{cor}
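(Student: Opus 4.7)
The plan is to reduce both assertions to Gromov's vanishing theorem (Theorem~\ref{grom:van:thm}) by bounding $\amcat(E)$ from above in terms of $\amcat(F)$ and $\lscat(p)$. Since the class $\Am$ of amenable groups is closed under isomorphisms, subgroups, quotients, and extensions by Abelian kernels, all the hypotheses needed for the earlier results are satisfied for $\mathscr{G} = \Am$.

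Concretely, I would first invoke Theorem~\ref{thm:G:cover:fibrations} applied to the inclusion $\iota \colon F \hookrightarrow E$ of the fibre, which yields
\[
\amcat(E) \leq \amcat(\iota) \cdot \lscat(p).
\]
Next, I would combine this with Remark~\ref{rem:equal:cat:cat:amenable}, which uses closure of $\Am$ under extensions by Abelian kernels and the homotopy long exact sequence of the fibration to give the equality $\amcat(\iota) = \amcat(F)$. Substituting, we obtain
\[
\amcat(E) \leq \amcat(F) \cdot \lscat(p).
\]
Hence the hypothesis $s \geq \amcat(F) \cdot \lscat(p)$ implies $s \geq \amcat(E)$.

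At this point both claims are immediate applications of Gromov's vanishing theorem for the space~$E$: part~(1) of Theorem~\ref{grom:van:thm} gives that $\comp_E^s \colon H_b^s(E) \to H^s(E)$ is the zero map, which is exactly item~(1); and part~(2) of Theorem~\ref{grom:van:thm} gives $\sv{\alpha}_1 = 0$ for every class $\alpha \in H_s(E; \R)$, proving item~(2).

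There is no real obstacle here beyond chaining the earlier results. The only point worth double-checking is that Remark~\ref{rem:equal:cat:cat:amenable} applies for $\mathscr{G} = \Am$, which indeed it does since amenability is stable under all of the required operations. In particular, the proof does not need to return to the open covers themselves: the estimate of Theorem~\ref{thm:G:cover:fibrations} has already done all the geometric work, and the input to Gromov's theorem is purely the numerical inequality $s \geq \amcat(E)$.
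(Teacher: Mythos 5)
Your proof is correct and follows exactly the same route as the paper: apply Theorem~\ref{thm:G:cover:fibrations} together with Remark~\ref{rem:equal:cat:cat:amenable} to get $\amcat(E) \leq \amcat(F) \cdot \lscat(p)$, then conclude with Gromov's vanishing theorem. The only difference is that you spell out the intermediate step $\amcat(\iota) = \amcat(F)$ explicitly, which the paper treats as implicit.
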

\begin{proof}
  From Theorem~\ref{thm:G:cover:fibrations} and
  Remark~\ref{rem:equal:cat:cat:amenable} we obtain $\amcat(E) \leq
  \amcat(F) \cdot \lscat(p)$. Thus, the claim follows by applying
  Gromov's vanishing theorem~\ref{grom:van:thm}.
\end{proof}

\begin{cor}[simplicial volume and fibre bundles]\label{cor:G:cover:fibrations:vanishing:simvol}
  Let $M$ be an oriented closed connected manifold that is
  the total space of a fibre bundle~$p\colon M \to B$ with oriented
  closed connected fibre~$N$ and base~$B$. If
  \[ \amcat (N) \leq \frac{\dim (M)}{\dim(B) + 1},
  \]
  then~$\|M\| = 0$.
\end{cor}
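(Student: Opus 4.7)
The plan is to apply Corollary~\ref{cor:compfib} with~$\mathscr{G} = \Am$ and chase the numerical hypotheses. Since $p\colon M \to B$ is a fibre bundle with closed manifold total space and closed manifold fibre, the base~$B$ is automatically a closed topological manifold (by the local triviality of~$p$), and the dimensions satisfy~$\dim(M) = \dim(N) + \dim(B)$. As $M$ is connected, $B$ is also connected, hence path-connected (being a manifold), and every point is a non-degenerate basepoint.

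The main numerical input is the bound $\lscat(B) \leq \dim(B) + 1$, which holds for any CW complex of dimension~$\dim(B)$; here this follows from the fact that $B$ admits a CW structure of dimension~$\dim(B)$ (e.g.\ via the standard argument using the open stars of the barycentric subdivision, cf.\ the remark after Definition of $\mathscr{G}$-category). Combining Theorem~\ref{thm:G:cover:fibrations} with Remark~\ref{rem:equal:cat:cat:amenable} (applicable since the class~$\Am$ of amenable groups is closed under subgroups, isomorphisms, quotients, and extensions by Abelian kernels), we obtain
\[
  \amcat(M) \;\leq\; \amcat(N) \cdot \lscat(p) \;\leq\; \amcat(N) \cdot \lscat(B) \;\leq\; \amcat(N) \cdot \bigl(\dim(B)+1\bigr).
\]
Plugging in the hypothesis~$\amcat(N) \leq \dim(M)/(\dim(B)+1)$ yields $\amcat(M) \leq \dim(M)$.

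Finally, I would apply Gromov's vanishing theorem~\ref{grom:van:thm} to the fundamental class: since $[M] \in H_{\dim(M)}(M;\R)$ and $\dim(M) \geq \amcat(M)$, the theorem gives~$\sv{[M]}_1 = 0$, i.e.\ $\|M\| = 0$. This is essentially the content of Corollary~\ref{cor:compfib} specialised to $s = \dim(M)$, and no additional argument is needed. There is no real obstacle here: the proof is a direct substitution once one observes that $B$ being a closed manifold guarantees~$\lscat(B) \leq \dim(B)+1$; the only subtlety worth mentioning is the use of Remark~\ref{rem:equal:cat:cat:amenable} to pass from~$\amcat(\iota)$ to~$\amcat(N)$, which is where the closure of~$\Am$ under Abelian extensions enters.
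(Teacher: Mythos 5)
Your proof is correct and follows essentially the same route as the paper: combine Theorem~\ref{thm:main} (via Theorem~\ref{thm:G:cover:fibrations} and Remark~\ref{rem:equal:cat:cat:amenable}) with the bound $\lscat(B) \leq \dim(B)+1$ to get $\amcat(M) \leq \dim(M)$, and then invoke Gromov's vanishing theorem. You spell out the sanity checks on $B$ (path-connectedness, non-degenerate basepoint) a bit more explicitly than the paper does, but the argument is the same.
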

\begin{proof}
  In view of Gromov's vanishing theorem (Theorem~\ref{grom:van:thm}), it
  suffices to show that $\amcat(M) \leq \dim(M)$. 
  Using Theorem~\ref{thm:main}, the fact that $\lscat(B) \leq \dim(B) +1$,
  and the hypothesis on~$N$, we indeed obtain
  \begin{align*}
    \amcat(M)
    &
    \leq \amcat(N) \cdot  \bigl(\dim(B) +1 \bigr)
    \\
    &
    \leq \dim(M),
  \end{align*}
  as desired.
\end{proof}

\begin{cor}[simplicial volume of mapping tori]\label{cor:vanishing:simvol:mapping:tori}
  Let $M$ be an oriented closed connected manifold that is a mapping
  torus of a self-homeomorphism of an oriented closed connected
  manifold~$N$ with
  \[2 \cdot  \amcat(N) \leq \dim(N) + 1 \ .
  \] 
  Then, we have 
  $
  \sv{M} = 0.
  $
\end{cor}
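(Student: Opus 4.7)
The plan is to recognise this as an immediate specialisation of Corollary~\ref{cor:G:cover:fibrations:vanishing:simvol} to the case where the base is the circle. Concretely, any mapping torus of a self-homeomorphism~$\phi\colon N\to N$ is by construction the total space of a fibre bundle $p\colon M\longrightarrow S^1$ with fibre~$N$, and since $N$ is a manifold this is indeed a fibre bundle to which the previous corollary applies (the basepoints of $S^1$ are non-degenerate).

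Under this bundle structure we have $\dim(M)=\dim(N)+1$ and $\dim(B)=\dim(S^1)=1$, so the quantity appearing in the hypothesis of Corollary~\ref{cor:G:cover:fibrations:vanishing:simvol} is
\[
  \frac{\dim(M)}{\dim(B)+1} = \frac{\dim(N)+1}{2}.
\]
The standing assumption $2\cdot\amcat(N)\leq \dim(N)+1$ is exactly the inequality $\amcat(N)\leq (\dim(N)+1)/2$, so the hypothesis of Corollary~\ref{cor:G:cover:fibrations:vanishing:simvol} is satisfied. Invoking that corollary we conclude $\sv{M}=0$.

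There is essentially no obstacle: the only thing to check is the translation of the numerical hypothesis, which is immediate. Alternatively, one can bypass the corollary and argue directly: since $\lscat(S^1)=2$, Theorem~\ref{thm:main} (combined with Remark~\ref{rem:equal:cat:cat:amenable}) gives $\amcat(M)\leq 2\cdot\amcat(N)\leq \dim(N)+1=\dim(M)$, and then Gromov's vanishing theorem (Theorem~\ref{grom:van:thm}) applied to the fundamental class yields $\sv{M}=0$.
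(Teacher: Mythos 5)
Your proof is correct and is exactly the paper's approach: the paper also simply declares the statement a special case of Corollary~\ref{cor:G:cover:fibrations:vanishing:simvol} applied to the bundle $M\to S^1$ with fibre $N$, and your numerical check $\dim(M)/(\dim(B)+1)=(\dim(N)+1)/2$ is the (straightforward) content left implicit there. The alternative direct route you sketch via Theorem~\ref{thm:main} and Gromov's vanishing theorem is also fine, but it is just unwinding the same ingredients.
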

\begin{proof}
  This is a special case of Corollary~\ref{cor:G:cover:fibrations:vanishing:simvol}.
\end{proof}

\begin{example}
  A classical question in hyperbolic geometry is to understand when
  hyperbolic manifolds fiber over the circle~\cite{AgolVirtual, Agol}.  Since the Euler
  characteristic in even dimension is proportional to the volume of
  hyperbolic manifolds, it is immediate to see that there are no even
  dimensional hyperbolic manifolds that fiber over the circle. On the
  other hand, the question is still open in odd dimension greater than~$3$.

  As hyperbolic manifolds have non-zero simplicial volume
  (Remark~\ref{rem:possimvol}),
  Corollary~\ref{cor:vanishing:simvol:mapping:tori} shows at least that
  odd-dimensional hyperbolic manifolds that fiber over the circle
  cannot have fiber with \emph{small} amenable category.
\end{example}

\begin{rem}\label{rem:bn}
  Corollary~\ref{cor:vanishing:simvol:mapping:tori} shows that mapping
  tori over connected sums~$M$ of amenable manifolds (of dimension at
  least~$3$) have zero simplicial volume (because~$\amcat(M) \leq
  \dim(M)$~\cite[Lemma~1]{GGH}\cite[Proposition~6.7]{clm}). This
  result may be interpreted as an extension of the classical result
  about the vanishing of the simplicial volume of manifold fibre
  bundles with amenable fiber~\cite[Exercise~14.15]{lueckl2}.

  Bucher and Neofytidis established vanishing results for simplicial
  volume of certain mapping tori over connected sums~$M$ of manifolds
  with zero simplicial
  volume~\cite[Theorem~1.7]{bucherneofytidis}. Their approach uses
  refined information on the structure of the self-glueing
  homeomorphism~$M \to M$.  Not all of these vanishing results can be
  recovered from Corollary~\ref{cor:vanishing:simvol:mapping:tori}
  (which is ignorant of the glueing map) and vice versa.
\end{rem}

\begin{rem}
  In the situation of mapping tori, the open amenable covers obtained
  via the proof of Corollary~\ref{cor:vanishing:simvol:mapping:tori}
  coincide with the obvious one obtained by \emph{doubling} an optimal
  open amenable cover~$(U_i)_{i\in I}$ of the fiber: We can ``split''
  the mapping torus of~$f \colon N \to N$ into two open overlapping
  cylinders~$N \times J_1$ and $N\times J_2$ that are glued
  appropriately. Then the mapping torus bundle is trivial over $J_1$ and
  $J_2$ and $(U_i \times J_1)_{i \in I} \cup (U_i \times
  J_2)_{i \in I}$ gives an amenable open cover of the mapping torus
  of~$f$ consisting of~$2 \cdot |I|$ elements.
\end{rem}

\begin{rem}
  It is tempting to prove Corollary~\ref{cor:compfib},
  Corollary~\ref{cor:G:cover:fibrations:vanishing:simvol}, and
  Corollary~\ref{cor:vanishing:simvol:mapping:tori} via the
  Hochschild-Serre spectral sequence for (bounded) cohomology.
  However, as there is no ``five lemma for zero maps'', there does not
  seem to be a direct way to do this.
\end{rem}

\begin{example}\label{rem:genmain}
  The following example shows that in Theorem~\ref{thm:main}, we
  cannot replace~$\lscat(B)$ by~$\catop_\mathscr{G} (B)$: There exist mapping
  tori~$M$ of oriented closed connected hyperbolic surfaces~$N$ that
  are oriented closed connected hyperbolic $3$-manifolds. Because $\amcat(S^1) = 1$,
  we then have (Remark~\ref{rem:possimvol}, Theorem~\ref{grom:van:thm})
  \begin{itemize}
  \item $\amcat(N) \cdot \amcat (S^1) = 3 \cdot 1$, but
  \item $\amcat(M) = 4$.
  \end{itemize}
\end{example}

\section{The fibre (non-)collapsing assumption}\label{sec:fnca}

In the context of minimal volume entropy, growth conditions on groups
naturally occur. We will explain how the fibre collapsing and
non-collapsing conditions by Babenko and Sabourau are related 
to categorical invariants for classes of groups with controlled
growth. In Section~\ref{sec:appminvolent}, we will apply our estimates
for fibrations to this setting.

\subsection{Groups with controlled growth}

To state the fibre (non-)collapsing conditions, we introduce the
corresponding classes of groups with controlled growth. Because
categorical invariants work better with classes of groups that are closed
under subgroups, we consider the following construction:

\begin{rem}\label{rem:classcomplete}
  Let $\mathscr{G}$ be a class of groups. We set
  \[ \overline{\mathscr{G}} :=
  \bigr\{ \Gamma \in \Ob(\Group)
  \bigm| \fa{\Lambda \leq \Gamma} (\text{$\Lambda$ finitely generated} \Rightarrow \Lambda \in \mathscr{G})
  \bigl\}.
  \]
  Then we have:
  \begin{enumerate}
  \item The class~$\overline{\mathscr{G}}$ is closed under taking subgroups.
  \item If $\mathscr{G}$ is closed under isomorphisms, then $\overline{\mathscr{G}}$ is closed
    under isomorphisms.
  \item If $\mathscr{G}$ is closed under quotients, then $\overline{\mathscr{G}}$ is closed under
    quotients.
  \item If $\mathscr{G}$ is closed under taking finitely generated subgroups,
    then the finitely generated groups in~$\overline{\mathscr{G}}$ coincide with
    the finitely generated groups in~$\mathscr{G}$.
  \end{enumerate}
\end{rem}

\begin{example}[classes of groups of controlled growth]
  Let $\delta \in \R_{>0}$. 
  The standard inheritance properties of growth conditions in finitely
  generated groups show that the following classes of groups are closed
  with respect to isomorphisms, finitely generated subgroups, and quotients:
  \begin{itemize}
  \item The class~$\Polyfg$ of finitely generated groups of polynomial growth.
    By the polynomial growth theorem~\cite{gromovpoly}, this class coincides
    with the class of all finitely generated virtually nilpotent groups.
  \item The class~$\Subexpfg$ of finitely generated groups of subexponential growth.
  \item The class~$\Subexpfg_{<\delta}$ of finitely generated groups of subexponential growth
    with subexponential growth rate~$< \delta$.
  \end{itemize}
  By Remark~\ref{rem:classcomplete}, the associated classes
  \[ \Poly := \overline \Polyfg,
  \quad
  \Subexp := \overline \Subexpfg,
  \quad
  \Subexp_{<\delta} := \overline{\Subexpfg_{<\delta}}
  \]
  are closed under isomorphisms, subgroups, and quotients. Moreover,
  the finitely generated groups in these classes are exactly the groups
  in~$\Polyfg$, $\Subexpfg$, $\Subexpfg_{<\delta}$, respectively. 
  
  Let~$\Expfg_{<\delta}$ be the class of finitely generated groups
  that admit a finite generating set whose growth rate is at most
  exponential of exponential growth rate~$<\delta$. In other words,
  a finitely generated group~$\Gamma$ does \emph{not} lie in~$\Expfg_{<\delta}$
  if and only if its uniform exponential growth rate~$\uexp(\Gamma)$
  is at least~$\delta$. It should be noted that $\Expfg_{<\delta}$
  is \emph{not} closed under taking finitely generated subgroups. 
\end{example}

\subsection{The fibre (non-)collapsing assumption}

We recall the fibre \mbox{(non-)}\allowbreak collapsing assumptions by Babenko and
Sabourau~\cite{bsfibre}. For convenience, we formulate the collapsing
condition for classes of groups; geometrically relevant choices
are the classes~$\Poly$, $\Subexp$, and $\Subexp_{<\delta}$.

\begin{defi}[fibre collapsing assumption; FCA]
  Let $\mathscr{G}$ be a class of groups.
  \begin{itemize}
  \item
    Let $k \in \N$. 
    A finite simplicial complex~$X$ satisfies the \emph{fibre
      collapsing assumption with respect to~$\mathscr{G}$ in dimension~$k$} if there exists a
    simplicial map~$f \colon X \longrightarrow P$ to a finite
    simplicial complex~$P$ with $\dim P \leq k$ and such that for
    all points~$p \in P$ (not necessarily vertices), the fibre~$f^{-1}(p)$
    is a $\mathscr{G}$-subset of~$X$.
  \item
    A finite simplicial complex~$X$ satisfies the \emph{fibre
      collapsing assumption with respect to~$\mathscr{G}$} if
    $X$ satisfies the fibre collapsing assumption with respect to~$\mathscr{G}$
    in dimension~$\dim X - 1$. 
  \end{itemize}
\end{defi}

\begin{rem}[fundamental groups of fibres are finitely generated]\label{rem:fingenfib}
  Let $X$ be a finite simplicial complex, let $P$ be a simplicial
  complex, and let $f \colon X \longrightarrow P$ be a simplicial
  map. Then, for every point~$p \in P$, the fibre~$f^{-1}(p)$
  is a finite simplicial complex. 
  In particular, the fundamental groups of all components
  of~$f^{-1}(p)$ are finitely generated.
\end{rem}

\begin{rem}\label{rem:fcafg}
  Let $\mathscr{G}$ be a class of groups that is closed under
  finitely generated subgroups, let $X$ be a finite simplicial
  complex, and let $k \in \N$.  Then $X$ satisfies the fibre
  collapsing assumption with respect to~$\mathscr{G}$ in dimension~$k$ if and
  only if it does so for~$\overline{\mathscr{G}}$:

  As $\mathscr{G}$ is closed under finitely generated subgroups, we have~$\mathscr{G}
  \subset \overline{\mathscr{G}}$ (Remark~\ref{rem:classcomplete}). In
  particular, the FCA for~$\mathscr{G}$ implies the FCA for~$\overline{\mathscr{G}}$. For
  the other implication, we argue as follows: Let $P$ be a simplicial
  complex and let $f \colon X \longrightarrow P$ be a simplicial
  map. Then, for every point~$p \in P$, the fundamental groups of all
  components of~$f^{-1}(p)$ are finitely generated
  (Remark~\ref{rem:fingenfib}). As every finitely generated subgroup
  in~$\overline{\mathscr{G}}$ also lies in~$\mathscr{G}$, the complex $X$ satisfies the
  fibre collapsing assumption with respect to~$\mathscr{G}$ in dimension~$k$ if
  it does so for~$\overline{\mathscr{G}}$.
\end{rem}

\begin{defi}[fibre non-collapsing assumption; FNCA]
  A finite simplicial complex~$X$ satisfies the \emph{fibre
    non-collapsing assumption} if there exists a~$\delta \in \R_{>0}$
  with the following property: For each finite simplicial complex~$P$
  with~$\dim P < \dim X$ and for each simplicial map~$f \colon X
  \longrightarrow P$, there exists a point~$p \in P$ and
  an~$x \in f^{-1}(p)$ such that~$\pi_1(i)(\pi_1(f^{-1}(p), x))$ has
  uniformly exponential growth with exponential growth rate at
  least~$\delta$, where $i \colon f^{-1}(p) \hookrightarrow X$ denotes
  the inclusion map (in other words: $f^{-1}(p)$ is \emph{not}
  an~$\Expfg_{<\delta}$-subset of~$X$).
\end{defi}

\begin{prop}\label{prop:fncacov}
  Let $\pi \colon \overline X \longrightarrow X$ be a simplicial
  finite-sheeted covering of finite simplicial complexes. If
  $\overline X$ satisfies the fibre non-collapsing assumption, then so
  does~$X$.
\end{prop}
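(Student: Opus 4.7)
The plan is to pull a given simplicial map back through the covering and apply the FNCA of $\overline X$. Fix a constant $\overline \delta > 0$ witnessing the FNCA on $\overline X$, and set $N := \deg \pi$. Given a simplicial map $f \colon X \to P$ of finite simplicial complexes with $\dim P < \dim X$, the composition $\overline f := f \circ \pi \colon \overline X \to P$ is simplicial and satisfies $\dim P < \dim X = \dim \overline X$. Applying the FNCA of $\overline X$ to $\overline f$ yields $p \in P$ and $\overline x \in \overline f^{-1}(p) = \pi^{-1}(f^{-1}(p))$ such that, writing $\overline i$ for the inclusion into $\overline X$ of the path-component of $\overline f^{-1}(p)$ through $\overline x$, the image $H := \pi_1(\overline i)(\pi_1(\overline f^{-1}(p), \overline x))$ has uniform exponential growth rate at least $\overline \delta$.

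Next I would set $x := \pi(\overline x) \in f^{-1}(p)$ and let $C$, respectively $\overline C$, be the path-component of $f^{-1}(p)$, respectively of $\overline f^{-1}(p)$, through $x$, respectively $\overline x$. The restriction $\pi|_{\overline C} \colon \overline C \to C$ is a covering of degree $\leq N$, so the induced map embeds $\pi_1(\overline C, \overline x)$ in $\pi_1(C, x)$ as a subgroup of index $\leq N$. Naturality of the inclusions yields the commuting square
\[
\xymatrix{
\pi_1(\overline C, \overline x) \ar[r]^-{\pi_1(\overline i)} \ar[d]_-{\pi_1(\pi|_{\overline C})}
& \pi_1(\overline X, \overline x) \ar[d]^-{\pi_1(\pi)}\\
\pi_1(C, x) \ar[r]_-{\pi_1(i)}
& \pi_1(X, x),
}
\]
identifying $\pi_1(\pi)(H)$ as a subgroup of $K := \pi_1(i)(\pi_1(C,x))$ of index at most $N$. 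As $\pi_1(\pi)$ is injective, the restriction $\pi_1(\pi)|_H \colon H \to \pi_1(\pi)(H)$ is an isomorphism, so $\pi_1(\pi)(H)$ inherits $\uexp \geq \overline \delta$.

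Finally I would convert this into a lower bound on $\uexp(K)$ itself by a Schreier-type estimate: for a finitely generated group $K$ and any subgroup $L \leq K$ of finite index $n$, one has $\uexp(L) \leq (2n - 1) \cdot \uexp(K)$, since, given any finite generating set $S$ of $K$, the Schreier rewriting produces a generating set of $L$ whose elements have $S$-length at most $2n - 1$, inflating the growth rate by at most that factor. Applying this with $L = \pi_1(\pi)(H)$ and $n \leq N$ yields $\uexp(K) \geq \overline \delta / (2N - 1)$. Setting $\delta := \overline \delta / (2N - 1) > 0$, a constant depending only on the covering and not on $f$, completes the verification of the FNCA for $X$.

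The main obstacle is this last step: uniform exponential growth is a delicate invariant because the infimum over generating sets need not be attained, so the Schreier-type bound must be used in a way that is uniform over all generating sets of $K$ simultaneously, rather than for a single fixed one. Since the FNCA only requires existence of some positive constant, the degradation of $\overline \delta$ by a factor depending on the covering degree is harmless.
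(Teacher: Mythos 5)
Your strategy mirrors the paper's exactly: compose $f$ with $\pi$ to get $\overline f = f \circ \pi$, apply the FNCA of $\overline X$ to produce $p$, $\overline x$, and the subgroup $H = \pi_1(\overline i)(\pi_1(\overline f^{-1}(p), \overline x))$ with uniform exponential growth, identify $\pi_1(\pi)(H)$ as a subgroup of $K = \pi_1(i)(\pi_1(f^{-1}(p), x))$ of index at most the covering degree, and then transfer the growth bound to $K$ via Schreier rewriting.

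The one step that is off, and where you should be careful, is the quantitative form of the Schreier transfer. The paper (following the cited results of Shalen--Wagreich and de la Harpe) takes $\uexp(\Gamma)$ to be the multiplicative growth rate $\inf_S \lim_r |B_S(r)|^{1/r}$, which is always $\geq 1$, and in this normalization the finite-index transfer is a $(2m-1)$-th \emph{root}: $\uexp(K) \geq \uexp(L)^{1/(2m-1)}$. Your linear inequality $\uexp(L) \leq (2m-1)\cdot\uexp(K)$ is the correct statement only if $\uexp$ denotes the \emph{logarithmic} growth rate $\inf_S \lim_r \tfrac{1}{r}\log|B_S(r)|$. Under the multiplicative convention it is simply not what the Schreier length bound gives: from $|B_T(r)| \leq |B_S((2m-1)r)|$ one obtains $\uexp_T(L) \leq \uexp_S(K)^{2m-1}$, not $\uexp_T(L) \leq (2m-1)\,\uexp_S(K)$, and passing to the infimum over $S$ (which you correctly identify as the delicate point) yields the power bound, not the linear one. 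Since the FNCA only requires the existence of \emph{some} positive constant, your argument does yield the proposition once the Schreier step is corrected; the right witness in the paper's normalization is $\delta := \overline\delta^{1/(2N-1)}$, a root rather than a quotient, which is exactly what the paper writes.
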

\begin{proof}
  Let $\delta \in \R_{>0}$ be such that $\overline X$ satisfies the FNCA
  with uniformly exponential growth rate~$\geq \delta$.
  We write~$d \in \N$ for the number of sheets of~$\pi$
  and show that $X$ satisfies the FNCA with uniformly exponential
  growth rate~$\geq \sqrt[2d-1]{\delta}$. The basic reason is that
  uniformly exponential growth is inherited by finite index supergroups.

  Let $P$ be a finite simplicial complex with~$\dim P < \dim X = \dim
  \overline X$ and let $f \colon X \longrightarrow P$ be a simplicial
  map. 
  Then, $\overline f := f \circ \pi \colon \overline X \longrightarrow P$
  is a simplicial map. As $\overline X$ satisfies FNCA, there is a point~$p \in P$
  and an~$\overline x \in \overline f^{-1}(p) \subset \overline X$ such that
  the image
  \[ \overline \Lambda := \pi_1(\overline i) \bigl(\pi_1(\overline f^{-1}(p),\overline x)\bigr)
  \subset \pi_1(\overline X , \overline x)
  \]
  has uniform exponential growth rate~$\geq \delta$, where $\overline i$
  denotes the inclusion into~$\overline X$.
  As $\pi_1(\pi)$ is injective, the group (where $i$ is the inclusion into~$X$)
  \[ \Gamma := \pi_1(i) \bigl(\pi_1(f^{-1}(p),x)\bigr) \subset \pi_1(X,x)
  \]
  contains a finite index subgroup~$\Lambda$ that
  is isomorphic to~$\overline \Lambda$ and has index at
  most~$[\pi_1(\overline X,\overline x) : \overline \Lambda] = d$.
  Therefore, $\Gamma$ has uniformly exponential growth with uniformly
  exponential growth rate at
  least~\cite[Proposition~3.3]{shalenwagreich}\cite[Proposition~2.4]{dlharpeuexp}
  \[ \sqrt[\protect{2 \cdot [\Gamma :\Lambda]-1}]{\uexp (\Lambda)}
    \geq \sqrt[2 \cdot d - 1]{\uexp (\overline \Lambda)}
    \geq \sqrt[2 \cdot d -1 ]{\delta}.
    \qedhere
  \]
\end{proof}

\subsection{F(N)CA via category invariants}

As observed by Babenko and Sabourau~\cite[Proposition~2.13,
  Proposition~3.10]{bsfibre}, the fibre collapsing and non-collapsing
assumptions are connected to multiplicity conditions on
open covers. We will recast this result in terms of categorical
invariants.

\begin{prop}[fibre conditions and categorical invariants]\label{prop:bscat}
  Let $\mathscr{G}$ be a class of groups that is closed under isomorphisms, let
  $X$ be a finite simplicial complex,
  and let $k \in \N$. Then:
  \begin{enumerate}
  \item If $X$ satisfies the fibre collapsing assumption with respect to~$\mathscr{G}$
    in dimension~$k$, then
    \[ \catop_\mathscr{G} (X) \leq k + 1.
    \]
  \item
    If in addition $\mathscr{G}$ is closed under finitely generated subgroups
    and if $\catop_\mathscr{G} (X) \leq k+1$, then there exists an iterated
    barycentric subdivision of~$X$ that satisfies the fibre collapsing
    assumption with respect to~$\mathscr{G}$ in dimension~$k$.
  \end{enumerate}
\end{prop}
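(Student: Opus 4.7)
The plan is to reformulate, in the language of $\mathscr{G}$-categories, both directions of the Babenko--Sabourau correspondence between the fibre collapsing assumption and $\pi_1$-constrained open covers of small cardinality. For part~1, given a witnessing simplicial map $f\colon X \to P$ with $\dim P \leq k$ and every fibre a $\mathscr{G}$-subset of~$X$, I would pull back the classical ``coloured'' open cover of~$P$ induced by its barycentric subdivision~$P'$: for each $i \in \{0, \dots, k\}$ set
\[
V_i := \bigsqcup_{\sigma \in P^{(i)}} \operatorname{st}_{P'}(b_\sigma),
\]
the disjoint union of the open stars in~$P'$ of the barycentres $b_\sigma$ of the $i$-dimensional simplices~$\sigma$ of~$P$. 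Then $(V_0, \dots, V_k)$ is an open cover of~$P$ with $k+1$ members, so $U_i := f^{-1}(V_i)$ yields an open cover of~$X$ of the same cardinality. By the disjointness appearing inside each~$V_i$, it suffices to verify that $f^{-1}(\operatorname{st}_{P'}(b_\sigma))$ is a $\mathscr{G}$-set for every simplex~$\sigma$ of~$P$. For this I would produce a strong deformation retraction $f^{-1}(\operatorname{st}_{P'}(b_\sigma)) \to f^{-1}(b_\sigma)$ by linearly contracting each point toward a chosen preimage of~$b_\sigma$ in its carrier simplex. Since $f^{-1}(b_\sigma)$ is a $\mathscr{G}$-subset of~$X$ by the FCA hypothesis and a deformation retraction preserves $\pi_1$-images into~$X$ on each path-component, the $\mathscr{G}$-property transfers to the whole preimage of the star, and hence to~$U_i$.

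For part~2, I would start with an open $\mathscr{G}$-cover $(U_1, \dots, U_{k+1})$ of~$X$ and iteratively barycentrically subdivide to obtain $X' := X^{(m)}$ in which every closed vertex star $\overline{\operatorname{st}}(v)$ is contained in some~$U_{j}$; this is available via the Lebesgue number argument because the mesh of~$X^{(m)}$ tends to zero. Fix a colouring $c\colon V(X') \to \{1, \dots, k+1\}$ with $\overline{\operatorname{st}}(v) \subseteq U_{c(v)}$ and define a simplicial map $f\colon X' \to \Delta^{k}$ (the standard simplex on~$\{1, \dots, k+1\}$) by $v \mapsto c(v)$, extended affinely; its image $P := f(X') \subseteq \Delta^k$ is a subcomplex of dimension at most~$k$. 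Given $p \in P$, let $I$ denote the vertex set of the carrier of~$p$ in~$P$; any open simplex $\sigma^{\circ}$ of~$X'$ meeting $f^{-1}(p)$ has colour set containing~$I$, so, picking any $i_0 \in I$, its closure~$\sigma$ lies in $\overline{\operatorname{st}}(v)$ for some vertex $v$ of colour~$i_0$, and therefore in~$U_{i_0}$. Consequently $f^{-1}(p) \subseteq U_{i_0}$. As $f^{-1}(p)$ is a compact polyhedron (see Remark~\ref{rem:fingenfib}), every path-component has finitely generated fundamental group, so its image in $\pi_1(X)$ is a finitely generated subgroup of a group in~$\mathscr{G}$; closure of~$\mathscr{G}$ under finitely generated subgroups then places the image in~$\mathscr{G}$, so $X'$ satisfies the FCA with respect to~$\mathscr{G}$ in dimension~$k$.

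The main obstacle lies in part~1, namely in the coherent construction of the deformation retraction. The naive choice sending~$x$ to the centroid of $f^{-1}(b_\sigma) \cap \tau$ with~$\tau$ the carrier of~$x$ in~$X$ depends discontinuously on~$x$ whenever the carrier jumps from a face to a cobounding simplex. I would resolve this by an inductive construction over the poset of simplices $\tau \subseteq X$ with $\sigma \subseteq f(\tau)$, selecting section points $c_\tau \in f^{-1}(b_\sigma) \cap \tau$ compatibly under face inclusions and gluing the resulting partial retractions. Alternatively, and perhaps more slickly, one may observe that the restriction of~$f$ to the preimage of the contractible star $\operatorname{st}_{P'}(b_\sigma)$ is a stratified PL block bundle, so its total space automatically deformation retracts onto any fibre.
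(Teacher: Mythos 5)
Your Part~2 argument is correct and takes a genuinely different (and arguably cleaner) route than the paper: instead of building the nerve of the cover, choosing a partition of unity, and taking a simplicial approximation of the nerve map, you subdivide until closed vertex stars refine the cover, colour the vertices, and map simplicially to the standard simplex~$\Delta^k$. Both constructions produce the required simplicial map with small-dimensional target and fibres contained in cover elements, and both then invoke Remark~\ref{rem:fingenfib} together with closure under finitely generated subgroups exactly as you do. Your version avoids the partition of unity and the Lebesgue/simplicial-approximation step for the nerve map, replacing it with a single Lebesgue subdivision and a vertex colouring.

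In Part~1, however, there is a genuine gap, and you have correctly identified where it sits but not resolved it. The problem is that you pull back the coloured star cover of~$P'$ via~$f\colon X \to P$ without subdividing~$X$, so that~$b_\sigma$ is not a vertex of the source triangulation and the ``contract within the carrier simplex'' recipe has no globally continuous description. Your proposed repair---choosing section points~$c_\tau \in f^{-1}(b_\sigma) \cap \tau$ compatibly over the face poset---cannot work in general: take~$\tau = [v_0,v_1,v_2]$ mapping to an edge~$[w_0,w_1]$ with~$f(v_0)=w_0$ and~$f(v_1)=f(v_2)=w_1$; then the minimal faces of~$\tau$ meeting~$f^{-1}(b_\sigma)$ are~$[v_0,v_1]$ and~$[v_0,v_2]$, whose slices are disjoint, so no single~$c_\tau$ can restrict compatibly to both. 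Moreover, straight-line contraction to a single point is not a \emph{strong} deformation retraction onto the fibre~$f^{-1}(b_\sigma)$ anyway (it does not fix the fibre pointwise and collapses its topology). The PL block bundle remark is plausible but much heavier machinery than is needed. The clean fix, which is what the paper does, is to pass to the barycentric subdivisions \emph{on both sides}: a simplicial map~$f\colon X\to P$ induces a simplicial map~$f'\colon X'\to P'$, and for a simplicial map the preimage of the open star of a vertex~$v$ admits a canonical strong deformation retraction onto~$(f')^{-1}(v)$ given in barycentric coordinates by pushing weight onto the vertices of~$X'$ mapping to~$v$ (equivalently, flowing the pulled-back barycentric coordinate function at~$v$ up to~$1$). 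This formula is simplexwise affine and agrees on faces, so it is globally continuous and fixes the fibre pointwise. With that change your Part~1 argument closes up and becomes essentially the paper's proof.
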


\begin{proof}
  The proof of Babenko and Sabourau~\cite[Proposition~2.13]{bsfibre}
  for the connection between the fibre collapsing assumption and
  multiplicities of open covers with $\pi_1$-restrictions also works
  in the full generality of classes of groups. The condition on
  the multiplicity of the open covers can be adapted into a 
  condition on~$\catop_\mathscr{G}$. 
  For the sake of completeness, we recall the arguments:

  \emph{Ad~1.}
  Let $f \colon X \longrightarrow P$ be a simplicial map witnessing 
  that $X$ satisfies the fibre collapsing assumption with respect to~$\mathscr{G}$
  in dimension~$k$.
  Taking the barycentric subdivision, yields a simplicial map~$f' \colon X'
  \longrightarrow P'$ between the barycentric subdivisions that witnesses
  that $X'$ satisfies the fibre collapsing assumption with respect to~$\mathscr{G}$
  in dimension~$k$ (as subsets of the geometric realisations, the fibres
  of~$f$ and~$f'$ agree). Because $X'$ is homeomorphic to~$X$, it suffices
  to show that $\catop_\mathscr{G} (X') \leq k +1$.

  Let $\mathcal{U} = (U_i)_{i \in I}$ be the open stars cover of~$P'$, regrouped
  and indexed by the dimensions of the underlying simplices in~$P$.
  Then $|\mathcal{U}| \leq \dim P + 1 = k +1$. 
  We now consider the pull-back cover~$\mathcal{V} := (V_i)_{i \in I}$ of~$X'$, where
  $V_i := f'{}^{-1}(U_i)$ for all~$i \in I$. Then $\mathcal{V}$ is an open cover of~$X'$
  with~$|\mathcal{V}| \leq |U| \leq k+1$. It thus suffices to show that each~$V_i$
  is a $\mathscr{G}$-subset of~$X'$.

  Let $i \in I$. Because $f'$ is a simplicial map, there exists a
  vertex~$p_i \in P'$ such that $V_i = f'{}^{-1}(U_i)$ deformation
  retracts onto the fibre~$f'{}^{-1}(p_i)$. Let $j_i \colon f'{}^{-1}(p_i)
  \hookrightarrow V_i$ and $k_i \colon V_i \hookrightarrow X'$ denote
  the inclusions. Then $j_i$ is a homotopy equivalence and so
  \[ \pi_1(k_i)\bigl(\pi_1(V_i,x)\bigr)
     \cong \pi_1(k_i \circ j_i) \bigl(\pi_1(f'{}^{-1}(p_i), x)\bigr)
  \]
  for all~$x \in f'{}^{-1}(p_i)$. Therefore, $V_i$ is a $\mathscr{G}$-subset of~$X'$
  and we conclude that $\catop_\mathscr{G}(X) = \catop_\mathscr{G}(X') \leq k+1$. 
  
  \emph{Ad~2.}
  For the converse implication, we use the nerve construction.  Let
  $\mathcal{U}$ be an open $\mathscr{G}$-cover of~$X$ with~$|\mathcal{U}| \leq k
  +1$.  Then, the nerve~$P$ of~$\mathcal{U}$ is a finite simplicial
  complex with~$\dim P = \mult \mathcal{U} - 1 \leq k$.  Let $\Phi$ be
  a partition of unity subordinate to~$\mathcal{U}$ and let $f \colon X
  \longrightarrow P$ be the nerve map associated with~$\Phi$. In
  general, $f$ is not simplicial; this can be handled as follows: By
  the Lebesgue lemma, there is an iterated barycentric
  subdivision~$X'$ of~$X$ such that each simplex of~$X'$ is contained
  in one of the sets in~$\mathcal{U}$ and such that $f$ admits a simplicial
  approximation~$f' \colon X' \longrightarrow P$.  If $p \in P$, then
  $f'{}^{-1}(p)$ is contained in one of the elements~$U_i$ of~$\mathcal{U}$.  In
  particular: If $j_i \colon f'{}^{-1}(p) \hookrightarrow U_i$ and
  $k_i \colon U_i \longrightarrow X'$ denote the inclusions, then
  \[ \pi_1(k_i \circ j_i) \bigl(\pi_1(f'{}^{-1}(p),x) \bigr)
    \subset \pi_1(k_i) \bigl(\pi_1(U_i,x)\bigr) 
  \]
  holds for all~$x \in f'{}^{-1}(p)$. Because $U_i$ is a $\mathscr{G}$-subset
  of~$X$ (whence~$X'$), because $\mathscr{G}$ is closed under
  finitely generated subgroups, and because $f'$ is a simplicial map,
  also $f'{}^{-1}(p)$ is a $\mathscr{G}$-subset of~$X'$ (Remark~\ref{rem:fingenfib}).
\end{proof}

\begin{cor}[FCA via cat]\label{cor:fcacat}
  Let $X$ be a finite simplicial complex, let $k \in \N$, and let $\mathscr{G}$
  be a class of groups that is closed under isomorphisms and finitely
  generated subgroups. Then the following are equivalent:
  \begin{enumerate}
  \item There exists an iterated barycentric subdivision~$X'$ of~$X$
    that satisfies the fibre collapsing assumption with respect to~$\mathscr{G}$
    in dimension~$k$.
  \item We have $\catop_\mathscr{G}(X) \leq k + 1$.
  \item We have $\catop_{\overline{\mathscr{G}}} (X) \leq k + 1$.
  \end{enumerate}
\end{cor}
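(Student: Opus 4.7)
The plan is to assemble the three implications directly from the groundwork already established in Proposition~\ref{prop:bscat}, Remark~\ref{rem:fcafg}, and Remark~\ref{rem:classcomplete}. No new geometric argument should be needed; the whole point is to recast the previous results as an equivalence and to factor the FCA comparison between $\mathscr{G}$ and $\overline{\mathscr{G}}$ through the corresponding categorical invariants.

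For the implication (1)~$\Rightarrow$~(2), I would apply Proposition~\ref{prop:bscat}(1) to~$X'$, obtaining $\catop_\mathscr{G}(X') \leq k+1$. Since an iterated barycentric subdivision of~$X$ is homeomorphic to~$X$, and $\catop_\mathscr{G}$ is an invariant of $\pi_1$-equivalence (Remark~\ref{rem:homotopic:map:same:G:cat}), this transfers to $\catop_\mathscr{G}(X) \leq k+1$. Conversely, (2)~$\Rightarrow$~(1) is literally Proposition~\ref{prop:bscat}(2), which requires exactly the standing hypothesis that $\mathscr{G}$ be closed under isomorphisms and finitely generated subgroups.

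The equivalence (2)~$\Leftrightarrow$~(3) is then obtained by comparing the two categorical invariants. For (2)~$\Rightarrow$~(3), I would first observe that the closure of~$\mathscr{G}$ under finitely generated subgroups yields $\mathscr{G} \subset \overline{\mathscr{G}}$: any $\Gamma \in \mathscr{G}$ has all of its finitely generated subgroups in~$\mathscr{G}$, hence lies in~$\overline{\mathscr{G}}$. This gives $\catop_{\overline{\mathscr{G}}}(X) \leq \catop_\mathscr{G}(X) \leq k+1$. For the converse (3)~$\Rightarrow$~(2), I would apply Proposition~\ref{prop:bscat}(2) with~$\overline{\mathscr{G}}$ in place of~$\mathscr{G}$; this is legitimate because Remark~\ref{rem:classcomplete} guarantees that $\overline{\mathscr{G}}$ is closed under isomorphisms and (all) subgroups. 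The conclusion is an iterated barycentric subdivision~$X'$ of~$X$ that satisfies the FCA with respect to~$\overline{\mathscr{G}}$ in dimension~$k$. By Remark~\ref{rem:fcafg}, this is equivalent to~$X'$ satisfying the FCA with respect to~$\mathscr{G}$ in the same dimension, and one final application of Proposition~\ref{prop:bscat}(1) gives $\catop_\mathscr{G}(X) = \catop_\mathscr{G}(X') \leq k+1$.

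There is no serious obstacle in this argument; the only delicate point is the passage between $\mathscr{G}$ and $\overline{\mathscr{G}}$. What makes this passage work is the combination of Remark~\ref{rem:classcomplete}(4) (finitely generated groups in~$\overline{\mathscr{G}}$ agree with those in~$\mathscr{G}$) and Remark~\ref{rem:fingenfib} (fibres of simplicial maps on finite complexes have finitely generated fundamental groups), which together are exactly what ensures that the two FCA notions really are interchangeable in this setting.
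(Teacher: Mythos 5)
Your proposal is correct and uses exactly the same ingredients as the paper's proof (Proposition~\ref{prop:bscat}, Remark~\ref{rem:fcafg}, Remark~\ref{rem:classcomplete}); the only cosmetic difference is that you prove the two biconditionals $(1)\Leftrightarrow(2)$ and $(2)\Leftrightarrow(3)$, while the paper runs the cyclic chain $(1)\Rightarrow(2)\Rightarrow(3)\Rightarrow(1)$, but your $(3)\Rightarrow(2)$ in fact factors through $(1)$ and reproduces the paper's argument verbatim. The proposal is also slightly more careful than the paper in flagging that $(1)\Rightarrow(2)$ only needs part~(1) of Proposition~\ref{prop:bscat} (closure under isomorphisms suffices there), which is a minor but accurate observation.
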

\begin{proof}
  \emph{Ad~$1. \Longrightarrow 2.$}
  Let us suppose that there exists an iterated barycentric
  subdivision~$X'$ of~$X$ that satisfies the fibre collapsing
  assumption with respect to~$\mathscr{G}$ in dimension~$k$.  Because $\mathscr{G}$ is
  closed under finitely generated subgroups,
  Proposition~\ref{prop:bscat} shows that
  \[ \catop_\mathscr{G} (X') \leq k+1.
  \]
  As $X'$ and $X$ are homeomorphic, we obtain~$\catop_\mathscr{G} (X) =
  \catop_\mathscr{G} (X') \leq k+1$.

  \emph{Ad~$2. \Longrightarrow 3.$}
  This is a direct consequence of the fact that~$\mathscr{G} \subset \overline{\mathscr{G}}$.
  
  \emph{Ad~$3. \Longrightarrow 1.$}
  Let $\catop_{\overline{\mathscr{G}}} (X) \leq k+1$. As $\overline{\mathscr{G}}$
  is closed under subgroups, by Proposition~\ref{prop:bscat}, there
  exists an iterated barycentric subdivision~$X'$ of~$X$ that
  satisfies the fibre collapsing assumption with respect to~$\overline{\mathscr{G}}$ in dimension~$k$. We can now apply Remark~\ref{rem:fcafg} to pass
  to~$\mathscr{G}$.
\end{proof}

\begin{example}
  Let $X$ be a finite simplicial complex. Then, by Corollary~\ref{cor:fcacat}, 
  $\catop_{\Poly} (X) \leq \dim X$ is equivalent to the existence of an
  iterated barycentric subdivision of~$X$ that satisfies the fibre
  collapsing condition with polynomial growth.
\end{example}

\begin{rem}[dimension~$2$]\label{rem:faccat2}
  The following generalisation of a result of Bregman and
  Clay~\cite[Proposition~4.1]{bregmanclay} is an instance of general
  considerations on categorical invariants: Let $\mathscr{G}$ be a class of
  groups that is closed under isomorphisms, finitely generated
  subgroups, and quotients. 
  Let $\Gamma$ be a group that does \emph{not} lie in~$\mathscr{G}$ and
  let $X$ be a finite simplicial complex with~$\pi_1(X) \cong \Gamma$.
  Then the following are equivalent~\cite[Corollary~5.4 and the
    subsequent remark]{clm}:
  \begin{enumerate}
  \item The group~$\Gamma$ is the fundamental group of a graph of
    groups whose vertex and edge groups all lie in~$\overline{\mathscr{G}}$.
  \item We have~$\catop_{\overline{\mathscr{G}}} (X) = 2$.
  \end{enumerate}
  If $X$ is of dimension~$2$, by 
  Corollary~\ref{cor:fcacat}, these conditions are equivalent to:
  \begin{enumerate}
  \setcounter{enumi}{2}
  \item There exists an iterated barycentric subdivision of~$X$ that
    satisfies the fibre collapsing assumption with respect to~$\mathscr{G}$.
  \item We have~$\catop_\mathscr{G} (X) = 2$.
  \end{enumerate}
  For example, this applies to the classes~$\Poly$, $\Subexp$, and
  $\Subexp_{<\delta}$. 
\end{rem}

\begin{cor}[FNCA via cat]\label{cor:fncacat}
  Let $X$ be a finite simplicial complex and let $\delta \in \R_{>0}$.
  If $\catop_{\Expfg_{<\delta}}(X) > \dim X$, then $X$ satisfies the fibre
  non-collapsing condition (with uniform exponential growth rate~$\delta$).
\end{cor}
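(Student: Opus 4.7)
The natural strategy is to argue by contrapositive: I will show that if $X$ does not satisfy the fibre non-collapsing condition with rate $\delta$, then $\catop_{\Expfg_{<\delta}}(X) \leq \dim X$. So first I would unpack the negation of the FNCA with witness $\delta$: this asserts the existence of a finite simplicial complex~$P$ with $\dim P < \dim X$ and a simplicial map $f\colon X \longrightarrow P$ such that for every $p \in P$ and every $x \in f^{-1}(p)$, the image $\pi_1(i)\bigl(\pi_1(f^{-1}(p),x)\bigr)$ fails to have uniform exponential growth with rate $\geq \delta$, i.e., lies in~$\Expfg_{<\delta}$. By definition of a $\mathscr{G}$-set, this is precisely the statement that every fibre $f^{-1}(p)$ is an $\Expfg_{<\delta}$-subset of~$X$.

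Next, I would observe that the data $(f,P)$ just produced is exactly a witness for $X$ satisfying the fibre collapsing assumption with respect to~$\Expfg_{<\delta}$ in dimension~$\dim X - 1$. At this point I would invoke Proposition~\ref{prop:bscat}(1) with $\mathscr{G} := \Expfg_{<\delta}$ and $k := \dim X -1$ to conclude
\[
  \catop_{\Expfg_{<\delta}}(X) \leq \dim X,
\]
which is the contrapositive of what we want.

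The one subtlety worth highlighting, and which I would emphasise in the write-up, is that Proposition~\ref{prop:bscat}(1) only requires the class~$\mathscr{G}$ to be closed under isomorphisms — \emph{not} under subgroups. This is essential here, because $\Expfg_{<\delta}$ is genuinely not closed under finitely generated subgroups, so part~(2) of the same proposition (and Corollary~\ref{cor:fcacat}) would be unavailable. Since uniform exponential growth rate is manifestly an isomorphism invariant of finitely generated groups, the isomorphism-closure hypothesis is satisfied, and the argument goes through without modification. Beyond this, the proof is a short matter of definitions, so I do not foresee any real obstacle; the content lies entirely in the FCA direction of the Babenko–Sabourau correspondence, which Proposition~\ref{prop:bscat}(1) already encapsulates.
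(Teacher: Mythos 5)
Your proof is correct and follows exactly the same route as the paper's (one-line) proof: negate the FNCA for the given~$\delta$, observe that this is precisely the FCA for~$\Expfg_{<\delta}$ in dimension~$\dim X - 1$, and apply the contrapositive of Proposition~\ref{prop:bscat}(1). Your observation that only isomorphism-closure is needed -- and hence that part~(2) is unavailable -- is exactly the subtlety the paper flags in the remark immediately following the corollary.
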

\begin{proof}
  This follows from the definition of the fibre non-collapsing
  condition and the contraposition of the first part of
  Proposition~\ref{prop:bscat}.
\end{proof}

It is not clear to us that the converse of Corollary~\ref{cor:fncacat}
also holds (up to subdivision) because $\Expfg_{<\delta}$ is not closed
under finitely generated subgroups.

For later use, we give an example that slightly generalises an example
by Babenko and Sabourau~\cite[Proposition~3.7]{bsfibre}:

\begin{example}\label{exa:fncaprod}
  Let $N_1, \dots, N_r$ be oriented closed connected rationally
  essential smooth manifolds of dimension~$\geq 2$ with non-elementary
  hyperbolic fundamental group.  Then the product~$M := N_1 \times
  \dots \times N_r$ satisfies the FNCA with respect to every
  triangulation: We proceed in the following steps:
  \begin{enumerate}
    \setcounter{enumi}{-1}
  \item If $\Gamma$ is a finitely generated hyperbolic
    group, then there exists a~$\delta_\Gamma \in \R_{>0}$ such that:
    Every finitely generated subgroup~$\Lambda$ of~$\Gamma$ is virtually
    cyclic or satisfies~$\uexp(\Lambda) \geq \delta_\Gamma$.
  \item There exists a~$\delta \in \R_{>0}$ such that: Every finitely
    generated subgroup~$\Lambda$ of~$\pi_1(M)$ is amenable or
    satisfies~$\uexp(\Lambda) \geq \delta$.
  \item $\amcat (M) > \dim (M)$.
  \item $\catop_{\Exp_{< \delta}} (M) > \dim (M)$. 
  \end{enumerate}
  The last property implies that $M$ satisfies the FNCA
  (Corollary~\ref{cor:fncacat}).

  \emph{Ad~0.} This is a result of Delzant an
  Steenbock~\cite[Theorem~1.1]{delzantsteenbock}.
  
  \emph{Ad~1.}
  We apply part~0 to the~$\pi_1(N_j)$ and set
  $\delta := \min (\delta_{\pi_1(N_1)}, \dots, \delta_{\pi_1(N_r)})$.
  Let $\Lambda \subset \pi_1(M)$ be a finitely generated subgroup.
  We distinguish two cases:
  \begin{itemize}
  \item For all~$j \in \{1,\dots, r\}$, the projection~$p_j(\Lambda)
    \subset \pi_1(N_j)$ is virtually cyclic.
  \item There exists a~$j \in \{1,\dots, r\}$ such that $p_j(\Lambda)
    \subset \pi_1(N_j)$ is not virtually cyclic. 
  \end{itemize}
  In the first case, $\Lambda$ is isomorphic to a subgroup of a product
  of $r$ virtually cyclic groups and
  thus amenable. In the second case, $\Lambda$ projects onto a subgroup~$\Lambda_j$
  of the non-elementary hyperbolic group~$\pi_1(N_j)$ that
  is not virtually cyclic; thus,
  \[ \uexp(\Lambda) \geq \uexp(\Lambda_j) \geq \delta_{\pi_1(N_j)} \geq \delta.
  \]

  \emph{Ad~2.}
  Let $ j\in \{1,\dots, r\}$. Then $N_j$ has non-zero
  simplicial volume (Remark~\ref{rem:possimvol}). Therefore, also $M
  = N_1 \times \dots \times N_r$ has non-zero simplicial
  volume~\cite{vbc}. In particular, $\amcat (M) > \dim (M)$
  (Theorem~\ref{grom:van:thm}).

  \emph{Ad~3.} 
  By the first part, all finitely generated subgroups of~$\pi_1(M)$
  that lie in~$\Exp_{<\delta}$ are amenable. In combination with the
  second part, we obtain 
  $\catop_{\Exp_{<\delta}} (M) \geq \amcat (M) > \dim (M)$.
\end{example}
 
\section{Applications to minimal volume entropy}\label{sec:appminvolent}

We recall the definition of minimal volume entropy
(Section~\ref{subsec:minvolent}) and the (non-)vanishing results of
Babenko and Sabourau (Section~\ref{subsec:entnonvan}).  In
Section~\ref{subsec:entfibvan}, we derive the vanishing results for
minimal volume entropy and fibrations. In
Section~\ref{subsec:entsimvol}, we extend the FNCA examples of Babenko
and Sabourau.

\subsection{Minimal volume entropy}\label{subsec:minvolent}

The minimal volume entropy measures the minimal possible growth rate of
balls.

\begin{defi}[minimal volume entropy]
  Let $X$ be a finite connected simplicial complex.
  \begin{itemize}
  \item A \emph{piecewise Riemannian metric} on~$X$ is a family
    of Riemannian metrics on all simplices of~$X$ that is compatible
    along common sub-simplices. Let $\Riem(X)$ be the set of
    all piecewise Riemannian metrics on~$X$.
  \item
    Let $g$ be a piecewise Riemannian metric on~$X$. Then the
    \emph{volume entropy of~$(X,g)$} is defined as
    \[ \volent(X,g) := \lim_{R \to \infty} \frac1 R \cdot \log \vol \bigl( B(R,\widetilde x), \widetilde g\bigr),
    \]
    where $\widetilde x$ is a vertex of the universal covering~$\widetilde X$ of~$X$,
    where $\widetilde g$ is the pull-back of~$g$ to~$\widetilde X$, and where 
    $B(R,\widetilde x)$ is the ball of
    radius~$R$ around~$\widetilde x$ in~$\widetilde X$.
    (The choice of~$\widetilde x$ does not matter).
  \item The \emph{minimal volume entropy of~$X$} is defined by
    \[ \ent(X) := \inf_{g \in \Riem(X)} {\volent(X,g)} \cdot {\vol(X,g)^{1/\dim(X)}}.
    \]
  \end{itemize}
\end{defi}

\begin{rem}[minimal volume entropy and barycentric subdivisions]\label{rem:entbary}
  Let $X$ be a finite connected simplicial complex. 

  If $X'$ is the barycentric subdivision of~$X$, then $\ent(X') = \ent(X)$,
  as can be seen by smooth approximation of piecewise Riemannian metrics on~$X'$
  by piecewise Riemannian metrics on~$X$.

  Inductively, we obtain that if $X'$ is an iterated barycentric subdivision of~$X$,
  then $\ent(X') = \ent(X)$.
\end{rem}

\begin{rem}[minimal volume entropy of smooth manifolds]\label{rem:entmfd}
  Let $M$ be a closed connected smooth manifold. Then the \emph{minimal volume
    entropy of~$M$} is defined as
  \[ \ent(M) := \inf_{g \in \Riem(M)} \volent(M,g) \cdot \vol(M,g)^{1/\dim(M)},
  \]
  where $\Riem(M)$ is the set of all actual Riemannian metrics
  on~$M$. If $X$ is a finite simplicial complex that triangulates~$M$,
  then $\ent(M) = \ent(X)$~\cite[Lemma~2.3]{babenko}. In fact, minimal
  volume entropy is a topological invariant of smooth manifolds in a
  very strong sense~\cite{babenkosurgery,brunnbauer}.
\end{rem}

\subsection{The (non-)vanishing theorems by Babenko and Sabourau}\label{subsec:entnonvan}

\begin{thm}[FCA and vanishing; \protect{\cite[Theorem~1.3]{bsfibre}}]\label{thm:entfca}
  Let $X$ be a finite connected simplicial complex of dimension~$n$. If there
  is a $k \in \{0,\dots, n -1\}$ such that $X$ satisfies the fibre
  collapsing assumption for~$\Subexp_{<(n-k)/n}$ in dimension~$k$, then
  \[ \ent(X) = 0.
  \]
\end{thm}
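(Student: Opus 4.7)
The plan is to produce, for each small~$\varepsilon>0$, a piecewise Riemannian metric~$g_\varepsilon$ on an iterated barycentric subdivision~$X'$ of~$X$ (permissible by Remark~\ref{rem:entbary}) such that~$\volent(X',g_\varepsilon)\cdot\vol(X',g_\varepsilon)^{1/n}\to 0$ as~$\varepsilon\to 0$; the infimum defining~$\ent(X)=\ent(X')$ then vanishes.

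First, I would extract from the FCA hypothesis a simplicial map~$f\colon X\longrightarrow P$ with~$\dim P\leq k$ and with each fibre a~$\Subexp_{<(n-k)/n}$-subset of~$X$. After sufficient barycentric subdivision, I would arrange that every top-dimensional~$n$-simplex~$\sigma$ of~$X$ admits a combinatorial ``prism'' decomposition as a product of a horizontal factor~$\sigma^P$ of dimension~$\leq k$ (which~$f$ sends onto a simplex of~$P$) and a complementary vertical fibre factor~$\sigma^F$ of dimension~$\geq n-k$. Define~$g_\varepsilon$ on each~$\sigma$ as the orthogonal sum of a fixed metric on~$\sigma^P$ and the rescaling by~$1/\varepsilon^2$ of a fixed metric on~$\sigma^F$ (so fibres become \emph{large} as~$\varepsilon\to 0$), glued consistently along common faces. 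An elementary scaling then gives~$\vol(X',g_\varepsilon)=\Theta(\varepsilon^{-(n-k)})$, and hence~$\vol(X',g_\varepsilon)^{1/n}=\Theta(\varepsilon^{-(n-k)/n})$.

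The decisive step is the entropy bound~$\volent(X',g_\varepsilon)\leq\delta\,\varepsilon+o(1)$ for some~$\delta<(n-k)/n$. Lifting to the universal cover, I would count elements of~$\pi_1(X)$ reachable by~$\widetilde{g_\varepsilon}$-geodesic loops of length~$\leq R$ at a fixed basepoint. Projecting via~$\widetilde f$, the vertical component has rescaled length~$\leq R$, corresponding to intrinsic fibre length~$\leq\varepsilon R$; the~$\Subexp_{<(n-k)/n}$ hypothesis on the fibre~$\pi_1$-images in~$\pi_1(X)$ then bounds the contribution of the fibre subgroups to the element count by~$\exp(\delta\,\varepsilon R)$ for some~$\delta<(n-k)/n$. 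Combining:
\[
  \volent(X',g_\varepsilon)\cdot\vol(X',g_\varepsilon)^{1/n}
  \leq C\cdot\delta\,\varepsilon\cdot\varepsilon^{-(n-k)/n}
  =C\delta\cdot\varepsilon^{k/n},
\]
which vanishes as~$\varepsilon\to 0$ whenever~$k\geq 1$ (the degenerate case~$k=0$ forces~$P$ to be a point, whence~$\pi_1(X)\in\Subexp_{<1}$ and one argues directly with a uniform rescaling).

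The main obstacle is the entropy estimate, specifically controlling the horizontal contribution from the base. A priori, loops traversing~$\widetilde P$ generate a subgroup of~$\pi_1(X)$ via~$\pi_1(f)$ whose exponential growth would add an~$\varepsilon$-independent term to~$\volent(X',g_\varepsilon)$, which would fatally exceed the~$\varepsilon^{-(n-k)/n}$ blow-up of~$\vol^{1/n}$. The resolution is to organise each loop in~$\widetilde X$ so that horizontal traversals contribute only a subexponential factor in~$R$, using that fibre subgroups along paths of bounded~$P$-length are conjugate via the horizontal motion and so do not enlarge the count of~$\pi_1(X)$-elements beyond the fibre bound. This is the technical heart of Babenko--Sabourau's construction in~\cite[Theorem~1.3]{bsfibre}, which I would import directly.
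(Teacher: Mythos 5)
This theorem is not proved in the paper at all: it is quoted verbatim from Babenko--Sabourau \cite[Theorem~1.3]{bsfibre}, and the paper's only ``proof'' is the citation. So there is no in-paper argument to compare yours against, and the right question is whether your sketch is a viable outline of the external proof.

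Your outline has a genuine gap beyond the one you flag. The quantitative claim $\volent(X',g_\varepsilon)\leq\delta\,\varepsilon+o(1)$ with $\delta<(n-k)/n$ does not follow from the subexponential-growth hypothesis in the way you describe. For a finitely generated group of subexponential growth, counting elements reachable by fibre loops of length $\leq R$ yields $\exp\bigl(o(R)\bigr)$ elements for every fixed rescaling, so the na\"ive fibre-only contribution to $\volent$ is $0$ for all $\varepsilon$, not $\Theta(\varepsilon)$. In particular, the specific threshold $\Subexp_{<(n-k)/n}$ plays no role in the estimate you wrote, which is a strong signal that the entropy bound must enter through a more delicate volume-counting argument than term-by-term rescaling of fibre and base directions. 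You correctly identify the horizontal/base contribution as the real obstacle, and you are right that there is no elementary way to dismiss it; but the way the subexponential rate threshold interacts with the anisotropic rescaling (and with the volume blow-up $\varepsilon^{-(n-k)}$) is exactly the content of Babenko--Sabourau's proof, and your sketch replaces it with a scaling heuristic that does not actually produce the stated inequality. Since you end by importing ``the technical heart'' wholesale, what remains is a plausible but not self-contained paraphrase rather than a proof. For the purposes of this paper, the correct move is simply to cite \cite[Theorem~1.3]{bsfibre}, as the paper does.
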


\begin{thm}[FNCA and non-vanishing; \protect{\cite[Theorem~1.5]{bsfibre}}]\label{thm:entfnca}
    Let $X$ be a finite connected simplicial complex that satisfies the fibre
    non-collapsing assumption. Then
    \[ \ent(X) > 0.
    \]
\end{thm}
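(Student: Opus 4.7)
I would prove the contrapositive: assume $\ent(X) = 0$ and show that the FNCA fails for every choice of parameter $\delta \in \R_{>0}$. By definition of minimal volume entropy (and because $\ent$ is preserved under iterated barycentric subdivision, Remark~\ref{rem:entbary}), there is a sequence $(g_k)_{k \in \N}$ of piecewise Riemannian metrics on~$X$ with $\vol(X,g_k) = 1$ and $\volent(X,g_k) \to 0$. The goal will be to feed such a metric~$g_k$ (for~$k$ sufficiently large) into a geometric construction that outputs a simplicial map $f \colon X' \longrightarrow P$ from some iterated barycentric subdivision $X'$ of~$X$ to a finite simplicial complex~$P$ with $\dim P < \dim X$, with the property that for every $p \in P$ and every $x \in f^{-1}(p)$, the subgroup $\pi_1(i)(\pi_1(f^{-1}(p),x)) \subset \pi_1(X,x)$ has uniformly exponential growth rate $< \delta$; this would contradict FNCA.

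The construction I would employ is a Gromov/Guth-style covering argument. Given~$g_k$, cover~$X$ by metric balls $B(x_j, r)$ of a well-chosen radius~$r = r(g_k)$ (tied to the injectivity/regularity scale of~$g_k$), arranged so that the cover has multiplicity at most~$\dim X$; this forces the nerve~$P$ to satisfy $\dim P \leq \dim X - 1$. Passing to the associated nerve map, performing simplicial approximation, and subdividing~$X$ sufficiently, I would obtain a simplicial map $f \colon X' \longrightarrow P$ each of whose fibres $f^{-1}(p)$ is contained in a single ball $B(x_{j(p)}, r)$ of~$X$. Consequently, for any basepoint $x \in f^{-1}(p)$, every element of $\pi_1(i)(\pi_1(f^{-1}(p),x))$ is represented in $\pi_1(X,x)$ by a loop of length at most $2r + \mathrm{const}$, where the constant depends only on the mesh of the triangulation.

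The volume-entropy hypothesis then enters as follows: the number of elements of $\pi_1(X,x)$ representable by loops in~$(X,g_k)$ of length~$\leq R$ grows at most like $e^{(\volent(X,g_k) + o(1)) R}$, so the subgroup $\pi_1(i)(\pi_1(f^{-1}(p),x))$, being generated by loops of uniformly bounded length, has uniform exponential growth rate at most $C \cdot \volent(X,g_k)$ for a constant~$C = C(X,r)$ depending only on~$X$ and the chosen cover scale. Taking~$k$ large enough that $C \cdot \volent(X,g_k) < \delta$ produces the required contradiction with FNCA. The main obstacle in this program is the covering step: one must keep the nerve dimension strictly below~$\dim X$ while simultaneously controlling both the radius~$r$ (needed for the growth estimate) and the mesh of the subdivision (needed for simplicial approximation). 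This is precisely where genuine geometric input — Guth-type width estimates and the interaction between injectivity radius, volume, and volume entropy — is required; once this controlled covering is in hand, the remainder of the argument is a straightforward bookkeeping of growth rates against~$\delta$.
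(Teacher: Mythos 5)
The paper itself offers no proof of this theorem: it is imported as Theorem~1.5 of Babenko and Sabourau~\cite{bsfibre} and used as a black box, so there is no in-paper argument for your proposal to be compared against.

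As an attempt at the cited result, your sketch has the right high-level shape --- argue the contrapositive, build a simplicial map to a polyhedron of dimension strictly below $\dim X$, convert small volume entropy into small uniform exponential growth rates of fibre subgroups by loop-length bookkeeping. But the covering step you defer is a genuine gap, not a technicality, and the specific construction you propose cannot be repaired. A cover of $(X,g)$ by balls of a fixed small radius~$r$ has, for $r$ small, nerve homotopy equivalent to~$X$ by the nerve lemma, hence of dimension at least $\dim X$ whenever $X$ carries nonzero top-dimensional homology --- in particular whenever $X$ is a closed $n$-manifold; the multiplicity of such a cover is then at least $n+1$, never~$n$. Small radius (needed so that the fibre subgroups are generated by short loops, making the entropy estimate bite) and multiplicity at most $\dim X$ (needed so that $\dim P < \dim X$, which is what FNCA quantifies over) are in genuine conflict, and resolving that conflict is the entire technical content of the theorem. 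The Gromov/Babenko--Sabourau argument does not cover by fixed-radius balls; it runs an inductive filling (volume-collapsing) decomposition that trades local volume bounds for dimension reductions one skeleton at a time. So your closing remark that ``once this controlled covering is in hand, the remainder is straightforward bookkeeping'' has the emphasis backwards: the controlled covering \emph{is} the theorem. A further technical wrinkle: as stated here, FNCA quantifies over simplicial maps with domain~$X$ itself, whereas a nerve-plus-simplicial-approximation construction naturally lands on an iterated subdivision~$X'$; one needs a subdivision-invariant formulation of FNCA (compare the treatment of FCA in Remark~\ref{rem:fcafg} and Proposition~\ref{prop:bscat}) or an additional descent argument.
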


We can reformulate these (non-)vanishing results in terms of generalised
categorical invariants:

\begin{cor}\label{cor:entcat}
  Let $X$ be a finite connected simplicial complex.
  \begin{enumerate}
  \item If there is a~$k \in \{0,\dots, \dim X-1\}$ with~$\catop_{\Subexp_{< (n-k)/n}} (X) \leq k+1$,
    then $\ent(X) = 0$.
  \item If there is a~$\delta \in \R_{>0}$ with~$\catop_{\Expfg_{<\delta}}\!\!(X) > \dim X$,
    then $\ent(X) > 0$.
  \end{enumerate}
\end{cor}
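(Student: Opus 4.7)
The plan is to translate both statements directly through the category/fibre-condition dictionary established in Section~\ref{sec:fnca} and then apply the Babenko--Sabourau (non-)vanishing theorems.

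For part~1, I would argue as follows. The class $\Subexp_{<(n-k)/n} = \overline{\Subexpfg_{<(n-k)/n}}$ is closed under isomorphisms, subgroups, and quotients; in particular, it is closed under finitely generated subgroups. Assuming $\catop_{\Subexp_{<(n-k)/n}}(X) \leq k+1$, Corollary~\ref{cor:fcacat} (the implication $2. \Rightarrow 1.$) produces an iterated barycentric subdivision~$X'$ of~$X$ satisfying the fibre collapsing assumption with respect to~$\Subexp_{<(n-k)/n}$ in dimension~$k$. Since $\dim X' = \dim X = n$ and $k \in \{0,\dots,n-1\}$, Theorem~\ref{thm:entfca} applies to~$X'$ and yields $\ent(X') = 0$. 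By Remark~\ref{rem:entbary}, passing from~$X'$ back to~$X$ preserves the minimal volume entropy, so $\ent(X) = \ent(X') = 0$.

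For part~2, the argument is even more direct. If $\catop_{\Expfg_{<\delta}}(X) > \dim X$, then Corollary~\ref{cor:fncacat} guarantees that $X$ satisfies the fibre non-collapsing assumption (with uniform exponential growth rate~$\delta$). Theorem~\ref{thm:entfnca} then immediately gives $\ent(X) > 0$.

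The only nontrivial point is the bookkeeping in part~1: the categorical hypothesis gives FCA only after iterated barycentric subdivision, so one must invoke the subdivision-invariance of minimal volume entropy (Remark~\ref{rem:entbary}) before quoting Theorem~\ref{thm:entfca}. Part~2 needs no such intermediate step, because the FNCA hypothesis in Theorem~\ref{thm:entfnca} is already a statement about arbitrary simplicial maps out of the given complex, matching the formulation of Corollary~\ref{cor:fncacat}. I do not expect any serious obstacle beyond checking that the closure properties of $\Subexp_{<(n-k)/n}$ match those required by Corollary~\ref{cor:fcacat}, which follows from Remark~\ref{rem:classcomplete}.
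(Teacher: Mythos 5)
Your proof is correct and follows exactly the same route as the paper: for part~1, use Corollary~\ref{cor:fcacat} to pass from the categorical bound to FCA on an iterated barycentric subdivision, apply Theorem~\ref{thm:entfca}, and then invoke Remark~\ref{rem:entbary}; for part~2, combine Corollary~\ref{cor:fncacat} with Theorem~\ref{thm:entfnca}. The explicit check that $\Subexp_{<(n-k)/n}$ has the required closure properties is a sensible piece of bookkeeping that the paper leaves implicit.
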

\begin{proof}
  \emph{Ad~1.}
  If $\catop_{\Subexp_{< (n-k)/n}} (X) \leq k+1$, then an interated
  subdivision~$X'$ of~$X$ satisfies the fibre collapsing
  assumption for~$\Subexp_{<(n-k)/n}$ in dimension~$k$ (Corollary~\ref{cor:fcacat}).  Thus,
  $\ent(X') = 0$, by the vanishing theorem (Theorem~\ref{thm:entfca}).
  We then use that $\ent(X) = \ent(X')$ (Remark~\ref{rem:entbary}).

  \emph{Ad~2.}  This is an immediate consequence of
  Corollary~\ref{cor:fncacat} and the non-vanishing
  theorem (Theorem~\ref{thm:entfnca}).
\end{proof}

\subsection{Vanishing results for fibrations}\label{subsec:entfibvan}

We now apply Theorem~\ref{thm:main} in the case of growth classes of groups
to obtain vanishing results for minimal volume entropy.

\begin{cor}[minimal volume entropy and fibrations]\label{cor:entfibrations}
  Let $p \colon E \to B$ be a simplicial fibration of finite connected
  simplicial complexes.  Let $x_0 \in B$ be a vertex and let $F :=
  p^{-1}(x_0)$. If
  \[ \catop_{\Subexp_{< 1/\dim(X)}}(F) \cdot \lscat (B) \leq \dim (X),
  \]
  then~$\ent(X) = 0$.
\end{cor}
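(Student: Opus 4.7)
The plan is to reduce the statement to the first part of Corollary~\ref{cor:entcat}, feeding it through the fibration estimate of Theorem~\ref{thm:main}. I read the symbol~$X$ in the statement as the total space~$E$; the subscript~$1/\dim(X)$ in the hypothesis then naturally means~$1/\dim(E)$. Setting~$n := \dim(E)$ and $\mathscr{G} := \Subexp_{<1/n}$, I would first recall, as discussed in Section~\ref{sec:fnca}, that $\mathscr{G}$ is closed under isomorphisms, subgroups, and quotients, so Theorem~\ref{thm:main} does apply with this choice of class.

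Applying Theorem~\ref{thm:main} to~$p \colon E \to B$---with the vertex~$x_0$ of the simplicial complex~$B$ playing the role of a non-degenerate basepoint, which is automatic in the CW setting---yields
\[
\catop_{\mathscr{G}}(E) \;\leq\; \catop_{\mathscr{G}}(F)\cdot \lscat(B).
\]
The hypothesis of the corollary then collapses this to $\catop_{\Subexp_{<1/n}}(E) \leq n$.

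The final step is to pick the correct parameter~$k$ in Corollary~\ref{cor:entcat}(1): setting $k := n-1 \in \{0,\dots,n-1\}$ makes $(n-k)/n = 1/n$, so the bound obtained above rewrites as $\catop_{\Subexp_{<(n-k)/n}}(E) \leq k+1$. Corollary~\ref{cor:entcat}(1) then immediately concludes~$\ent(E) = 0$.

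There is really no serious obstacle: the argument is a bookkeeping exercise, and the only content worth checking is that the exponent~$1/\dim(E)$ appearing in the hypothesis is precisely calibrated so that the choice $k = n-1$ fits inside the range and exponent demanded by the Babenko--Sabourau vanishing criterion. Had the hypothesis used a different exponent, one would need either to select a different $k$ (and in turn impose a different numerical bound on the factor $\lscat(B)$) or to weaken the growth class, so the interest of the statement lies entirely in how its hypothesis is tuned to match the available vanishing criterion.
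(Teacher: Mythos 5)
Your proof is correct and follows the same route as the paper's own argument: apply Theorem~\ref{thm:main} with $\mathscr{G} = \Subexp_{<1/\dim(E)}$ (using the closure properties established in Section~\ref{sec:fnca} and the fact that a vertex is a non-degenerate basepoint) to get $\catop_{\mathscr{G}}(E) \le \dim(E)$, then invoke Corollary~\ref{cor:entcat}(1) with $k = \dim(E)-1$. Your additional remark making the choice of $k$ explicit, and observing that the exponent $1/\dim(E)$ is exactly tuned to that choice, is a helpful gloss on the paper's terser proof but does not change the argument.
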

\begin{proof}
  Under the given hypotheses, Theorem~\ref{thm:main} shows that
  \[ \catop_{\Subexp_{< 1/\dim(X)}} (X)
  \leq \catop_{\Subexp_{< 1/\dim(X)}}(F) \cdot \lscat (B)
  \leq \dim (X).
  \]
  Therefore, Corollary~\ref{cor:entcat} implies~$\ent(X) = 0$.
\end{proof}

\begin{cor}[minimal volume entropy and fibre bundles]\label{cor:G:cover:fibrations:vanishing:minvolent}
  Let $M$ be an oriented closed connected smooth manifold that is
  the total space of a fibre bundle~$M \to B$ with oriented
  closed connected smooth fibre~$N$ and base~$B$. If
  \[ \catop_{\Subexp_{<1/\dim(M)}} (N) \leq \frac{\dim (M)}{\dim(B) + 1},
  \]
  then~$\ent(M) = 0$.
\end{cor}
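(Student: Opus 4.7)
The plan is to follow the template of the proof of Corollary~\ref{cor:G:cover:fibrations:vanishing:simvol}, replacing the input Gromov vanishing theorem by the Babenko--Sabourau vanishing result in its categorical reformulation provided by Corollary~\ref{cor:entcat}(1).

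First, I would check that the class $\mathscr{G} := \Subexp_{<1/\dim(M)}$ satisfies the hypotheses of Theorem~\ref{thm:main}: by the discussion of classes of groups of controlled growth in Section~\ref{sec:fnca}, this class is closed under isomorphisms, subgroups, and quotients. Writing $n := \dim(M)$ and applying Theorem~\ref{thm:main} to the fibre bundle $p\colon M \to B$ (which is in particular a fibration with non-degenerate basepoint), combined with the standard bound $\lscat(B) \leq \dim(B) + 1$ and the hypothesis on~$N$, I would obtain
\[
  \catop_\mathscr{G}(M)
  \leq \catop_\mathscr{G}(N) \cdot \lscat(B)
  \leq \frac{n}{\dim(B)+1} \cdot \bigl(\dim(B) + 1\bigr)
  = n.
\]

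To conclude, I would fix a smooth triangulation $X$ of~$M$; since $\catop_\mathscr{G}$ is a topological invariant, $\catop_\mathscr{G}(X) = \catop_\mathscr{G}(M) \leq n = \dim X$. Choosing $k := n-1$ in Corollary~\ref{cor:entcat}(1), so that $(n-k)/n = 1/n = 1/\dim(M)$ and $k+1 = n$, the hypothesis of that corollary holds for~$X$, and hence $\ent(X) = 0$. By Remark~\ref{rem:entmfd}, $\ent(M) = \ent(X) = 0$.

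I do not anticipate a significant obstacle: the argument is an almost direct transcription of the simplicial volume case, and the only delicate point is the bookkeeping that matches the growth parameter $1/\dim(M)$ in the hypothesis with the correct choice of $k$ in the Babenko--Sabourau vanishing statement.
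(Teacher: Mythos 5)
Your proposal is correct and follows essentially the same chain as the paper's proof: Theorem~\ref{thm:main} together with $\lscat(B)\leq \dim(B)+1$ gives $\catop_{\Subexp_{<1/n}}(M)\leq n$, then a triangulation, Remark~\ref{rem:entmfd}, and Corollary~\ref{cor:entcat}(1) with $k=n-1$ yield $\ent(M)=0$. The paper routes the middle step through the intermediate Corollary~\ref{cor:entfibrations} (triangulating $M$, $B$, $N$ to get a simplicial fibration first), whereas you apply Theorem~\ref{thm:main} to the topological bundle and only then triangulate $M$; your ordering is, if anything, slightly tidier since it avoids needing a compatible triangulation of the whole fibration.
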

\begin{proof}
  Triangulating~$M$, $B$, and $N$ and subdividing often enough, we may
  assume that we have a simplicial fibration between finite simplicial
  complexes of the corresponding dimensions. Moreover, the notions of minimal
  volume entropy for smooth manifolds and their triangulations
  coincide (Remark~\ref{rem:entmfd}). Using the estimate $\lscat (B) \leq \dim (B) +1$,
  the result follows from Corollary~\ref{cor:entfibrations}.
\end{proof}
  
\begin{cor}[minimal volume entropy of mapping tori]
  Let $X$ be a finite connected simplicial complex of dimension~$n$
  that fibres as a fibre bundle over the circle, with (simplicial)
  fibre~$F$. If
  \[ 2 \cdot \catop_{\Subexp_{<1/n}}(F) \leq \dim  F + 1,
  \]
  then $\ent X = 0$.
\end{cor}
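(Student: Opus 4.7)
The plan is to deduce this statement directly from Corollary~\ref{cor:entfibrations} applied to the given fibre bundle $p \colon X \to S^1$. First, I would observe that a fibre bundle over~$S^1$ is in particular a Hurewicz fibration and, after an iterated barycentric subdivision of both $X$ and a suitable triangulation of~$S^1$, I may realise $p$ as a simplicial fibration between finite connected simplicial complexes. This preliminary bookkeeping is harmless, since $\ent$ is invariant under iterated barycentric subdivision (Remark~\ref{rem:entbary}) and $\catop_{\Subexp_{<1/n}}$ depends only on the homeomorphism type of the underlying space, so none of the quantities appearing in the statement change. I then fix a vertex~$x_0 \in S^1$ and set $F := p^{-1}(x_0)$, noting that $\dim F = \dim X - 1 = n - 1$.

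The numerical input is the classical fact $\lscat(S^1) = 2$ together with the identity $\dim F + 1 = n = \dim X$. Combined with the hypothesis, this rearranges to
\[
  \catop_{\Subexp_{<1/n}}(F) \cdot \lscat(S^1)
  \;=\; 2 \cdot \catop_{\Subexp_{<1/n}}(F)
  \;\leq\; \dim F + 1
  \;=\; \dim X,
\]
which is precisely the hypothesis appearing in Corollary~\ref{cor:entfibrations} (with $B = S^1$ and $\dim X = n$). Invoking Corollary~\ref{cor:entfibrations} then yields $\ent(X) = 0$, as desired.

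In short, the corollary is essentially a direct specialisation of the fibration vanishing result to $B = S^1$. There is no genuine obstacle; the only point worth flagging is the standard simplicial/triangulation preparation at the start, whose correctness is guaranteed by Remark~\ref{rem:entbary}.
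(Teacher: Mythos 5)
Your proposal is correct and follows exactly the paper's route: the paper's proof reads simply ``This is a special case of Corollary~\ref{cor:entfibrations},'' which is precisely the specialisation $B = S^1$, $\lscat(S^1) = 2$, $\dim F + 1 = \dim X = n$ that you spell out. The additional bookkeeping you add about triangulations and Remark~\ref{rem:entbary} is reasonable diligence but not a deviation in method.
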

\begin{proof}
  This is a special case of Corollary~\ref{cor:entfibrations}.
\end{proof}

\subsection{FNCA, minimal volume entropy, and simplicial volume}\label{subsec:entsimvol}

For manifolds, minimal volume entropy is an upper bound for simplicial
volume (up to a dimension
constant)~\cite{vbc}\cite[Th\'eor\`eme~D]{bcg}; in particular,
Remark~\ref{rem:possimvol} thus leads to examples of positive minimal
volume entropy. Conversely, the following is an open
problem~\cite{bsfibre}:

\begin{quest}
  Let $M$ be an oriented closed connected smooth manifold
  with~$\| M \| = 0$. Does this imply that $\ent(M) = 0$\;?
\end{quest}

By now, we know that the previous question admits a positive 
answer in dimension 2~\cite{Katok} and for all oriented closed connected 
geometric manifolds in dimension 3~\cite{ErikaP} 
(whence all by Perelemann's proof of Thurston's geometrisation conjecture)
 and 4~\cite{SuarezSerrato}. Even though we do not expect this question to have a positive answer in
full generality; a particularly interesting special case to
study would be aspherical oriented closed connected manifolds.

As shown by Babenko and Sabourau, for finite simplicial complexes,
FNCA (and thus positive minimal volume entropy) does not necessarily
imply the non-vanishing of ``simplicial volume''
(interpreted appropriately)~\cite[Theorem~1.6]{bsfibre}. Using a
variation of their construction, we obtain aspherical examples
of this type:

\begin{prop}\label{prop:fncasimvol}
  Let $n \in \N_{\geq 2}$. Then, there exists a finite connected
  simplicial complex~$X$ with the following properties:
  \begin{enumerate}
  \item The space~$X$ is aspherical.
  \item The complex~$X$ satisfies the fibre non-collapsing assumption.
  \item We have~$\dim X = n$ and $H_n(X;\Z) \cong 0$.
  \end{enumerate}
\end{prop}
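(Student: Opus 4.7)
The plan is to realise $X$ as a closed non-orientable aspherical $n$-manifold whose orientation double cover is a product of closed oriented hyperbolic manifolds of dimension~$\geq 2$. Non-orientability will provide~$H_n(X;\Z) = 0$, asphericity will follow because the universal cover is a product of hyperbolic spaces (hence contractible), and the FNCA for~$X$ will be inherited from its orientation double cover via Example~\ref{exa:fncaprod} combined with Proposition~\ref{prop:fncacov}.

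Concretely, I would write $n = 2a + 3b$ with $a, b \in \N$ (which is possible for every~$n \geq 2$) and take
\[
  X := N_1 \times \dots \times N_a \times M_1 \times \dots \times M_b,
\]
where each~$N_i$ is a closed hyperbolic surface (of genus~$\geq 2$ in the orientable case and of genus~$\geq 3$ in the non-orientable case), each~$M_j$ is a closed hyperbolic $3$-manifold (e.g., the Gieseking manifold if one wants a non-orientable example), and \emph{exactly one} of the factors is chosen to be non-orientable. For the edge cases, $n = 2$ reduces to taking a single non-orientable hyperbolic surface of genus~$\geq 3$, and $n = 3$ to a single non-orientable hyperbolic $3$-manifold. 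Fixing any smooth triangulation turns $X$ into a finite simplicial complex of dimension exactly~$n$.

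To verify the three required properties, asphericity is immediate since each factor is aspherical; $X$ is a closed non-orientable $n$-manifold, so $\dim X = n$ and $H_n(X;\Z) = 0$; and for the FNCA one notes that the orientation double cover $\widetilde{X} \to X$ is obtained by replacing the non-orientable factor with its own orientation double cover. By construction, $\widetilde{X}$ is then a product of closed oriented rationally essential hyperbolic manifolds of dimension~$\geq 2$ with non-elementary hyperbolic fundamental groups, so Example~\ref{exa:fncaprod} gives the FNCA for~$\widetilde{X}$, and Proposition~\ref{prop:fncacov} transfers it to~$X$. The only point requiring a little care is the bookkeeping of the decomposition~$n = 2a + 3b$ and the availability of a non-orientable hyperbolic factor, which is elementary in dimensions~$2$ and~$3$ and suffices thanks to the product structure; no existence result for non-orientable hyperbolic manifolds in higher dimensions is needed.
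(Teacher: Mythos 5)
Your proposal follows essentially the same route as the paper: choose a closed non-orientable aspherical manifold that decomposes as a product of hyperbolic factors of dimension~$2$ and~$3$, apply Example~\ref{exa:fncaprod} to the orientation double cover, and then transfer FNCA down via Proposition~\ref{prop:fncacov}, with non-orientability forcing $H_n(X;\Z)\cong 0$. The only notable differences are cosmetic bookkeeping ($n = 2a+3b$ versus the paper's $n = 2(k+1)$ or $n = 3+2k$, both of which cover all $n \geq 2$) and a small slip in the parenthetical: the Gieseking manifold is a \emph{cusped} (non-compact) hyperbolic $3$-manifold, so it cannot serve as the closed non-orientable $3$-dimensional factor; however, closed non-orientable hyperbolic $3$-manifolds do exist, so this does not affect the validity of your argument.
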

\begin{proof}
  As~$n \geq 2$, there exists a~$k \in \N$ with~$n = 2 \cdot (k+1)$
  or~$n = 3 + 2 \cdot k$. 
  Let $N$ be the product of $k$ oriented closed connected hyperbolic
  surfaces. In the first case, let $\Sigma$ be a non-orientable
  closed connected hyperbolic surface; in the second case, we take
  a non-orientable closed connected hyperbolic $3$-manifold.
  We then set
  \[ M := \Sigma \times N
  \]
  and consider the orientation double covering~$p \colon \overline M
  \longrightarrow M$ of~$M$. Moreover, we triangulate~$\Sigma \times
  N$ and take the induced triangulation of~$\overline M$.

  Then $\overline M$ satisfies~FNCA (Example~\ref{exa:fncaprod}).
  Therefore also $M$ satisfies~FNCA (Proposition~\ref{prop:fncacov}). 
  By construction, $M$ is aspherical, $n$-dimensional, and $H_n(M;\Z)
  \cong 0$ (as $M$ is non-orientable).
 
  Alternatively, one can also carry out the same argument when $M$ is
  a non-orientable closed connected hyperbolic $n$-manifold; such
  manifolds indeed
  exist~\cite[Theorem~1.2]{longreid}\cite[Section~4.2]{kolpakovrioloslavich}. The
  argument above has the advantage that it does not need existence
  theorems of such manifolds in higher dimensions.
\end{proof}

\begin{cor}\label{cor:entsimvol}
  Let $ n\in \N_{\geq 2}$ and let $c \in \R_{>0}$. Then, there exists
  a finite connected simplicial complex~$X$ with the following
  properties:
  \begin{enumerate}
  \item The space~$X$ is aspherical.
  \item We have~$\ent(X) > c$.
  \item We have~$\dim X = n$ and~$H_n(X;\Z) \cong \Z$ as well as
    \[ \fa{\alpha \in H_n(X;\R)} \|\alpha\|_1 = 0.
    \]
  \end{enumerate}
\end{cor}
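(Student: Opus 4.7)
The plan is to combine Proposition~\ref{prop:fncasimvol} with a wedge construction that attaches an $n$-torus in order to create the prescribed top integral homology while keeping the $\ell^1$-seminorm trivial. First, I would apply Proposition~\ref{prop:fncasimvol} to produce an aspherical closed non-orientable $n$-manifold~$M_0$ satisfying the fibre non-collapsing assumption, with $H_n(M_0;\Z) \cong 0$. To reach $\ent > c$, I would replace $M_0$ by a sufficiently high odd-degree finite covering $M \to M_0$: odd degree preserves non-orientability (so $H_n(M;\Z) \cong 0$ persists) and asphericity, and the simplicial volume $\|M\|$ is multiplied by the degree. Gromov's volume-entropy inequality $\ent(M)^n \geq c_n \cdot \|M\|$ then ensures $\ent(M) > c$ once the degree is chosen large enough.

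Next, I would set $X := M \vee T^n$, where $T^n$ is the standard $n$-torus and both factors are triangulated so that the wedge is a finite simplicial complex of dimension~$n$. Asphericity of $X$ is the standard fact that a wedge of aspherical CW-complexes is aspherical: $\widetilde X$ is a tree-like assembly of copies of $\widetilde M$ and $\widetilde{T^n}$ glued at single points (a Bass--Serre tree of contractible spaces), which is contractible. The wedge formula in reduced homology yields
\[
H_n(X;\Z) \cong H_n(M;\Z) \oplus H_n(T^n;\Z) \cong 0 \oplus \Z \cong \Z.
\]
For the $\ell^1$-seminorm, $H_n(X;\R)$ is generated by the image of $[T^n]$ under the inclusion $\iota \colon T^n \hookrightarrow X$; since $\pi_1(T^n) = \Z^n$ is amenable, Gromov's vanishing theorem (Theorem~\ref{grom:van:thm}) gives $\|[T^n]\|_1 = 0$ in $T^n$, and pushing the witnessing cycles forward along $\iota$ shows $\|\alpha\|_1 = 0$ for every $\alpha \in H_n(X;\R)$.

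For the entropy lower bound, I would prove $\ent(X) \geq \ent(M) > c$ as follows. Given any $g \in \Riem(X)$, the restriction $g|_M$ lies in $\Riem(M)$; moreover, $\widetilde X$ contains an isometric copy of $(\widetilde M, \widetilde{g|_M})$ as one vertex-space in the tree-of-covers decomposition. Hence, for any radius $R$, the ball in $\widetilde X$ about a fixed lift of the basepoint contains the corresponding ball in $\widetilde M$, and also $\vol(X,g) \geq \vol(M, g|_M)$, so
\[
\volent(X,g) \cdot \vol(X,g)^{1/n}
\geq \volent(M, g|_M) \cdot \vol(M, g|_M)^{1/n}
\geq \ent(M).
\]
Taking the infimum over $g$ gives $\ent(X) \geq \ent(M) > c$.

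The main obstacle is the quantitative step at the start: Proposition~\ref{prop:fncasimvol} only yields $\ent(M) > 0$ via the FNCA, not a quantitative lower bound. The proposed fix combines Gromov's volume-entropy inequality with multiplicativity of simplicial volume under finite coverings; alternatively, one can choose the hyperbolic factors in the construction of Proposition~\ref{prop:fncasimvol} to have sufficiently large volume from the outset, so that $\|M\|$---and hence $\ent(M)$---is already $> c$.
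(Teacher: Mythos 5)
Your strategy is sound and ends up with a correct proof, but it routes around a key ingredient rather than using it, and it does so at the cost of opening up a black box. The paper keeps $Y$ from Proposition~\ref{prop:fncasimvol} as a general simplicial complex, blows up its entropy by forming $Z := \bigvee_m Y$ and invoking the wedge inequality $\ent(\bigvee_m Y) \geq m \cdot \ent(Y)$ of Babenko--Sabourau (\cite[Theorem~2.6]{bsseminorm}), and then sets $X := (S^1)^{\times n} \vee Z$. You instead peel back the proof of Proposition~\ref{prop:fncasimvol} (using that the complex there can be taken to be a non-orientable manifold), and force $\ent(M) > c$ via the simplicial volume, either by passing to a high odd-degree cover or by choosing the hyperbolic factors with large volume, and then applying Gromov's volume-entropy inequality $\ent(M)^n \geq c_n \|M\|$. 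This is a genuinely different route for the quantitative step. What your approach buys is a self-contained proof of a weak form of the wedge inequality: your direct argument that $\ent(M \vee T^n) \geq \ent(M)$ (isometric copy of $\widetilde M$ inside $\widetilde X$, ball and volume monotonicity) is correct and avoids citing \cite[Theorem~2.6]{bsseminorm}. What it costs is: (i) Proposition~\ref{prop:fncasimvol} as stated gives only a simplicial complex, so you are relying on the proof, not the statement; (ii) Gromov's volume-entropy inequality is stated for oriented manifolds, so one must pass to the orientation double cover, using also multiplicativity of $\ent$ and $\|\args\|$ under finite covers; (iii) the existence of arbitrarily large odd-degree covers of $M_0$ needs an argument (e.g.\ $b_1(\Sigma) > 0$ for a non-orientable hyperbolic surface $\Sigma$, giving $\Z$-quotients of any index). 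Your second alternative (taking hyperbolic factors with large volume from the outset) avoids the covering argument and is cleaner, but it too relies on Gromov's inequality and on unpacking the construction of Proposition~\ref{prop:fncasimvol}. The remainder of your argument --- asphericity of the wedge via the tree-of-spaces decomposition, $H_n(X;\Z) \cong \Z$ via the wedge formula, and the vanishing of $\|\alpha\|_1$ via functoriality and $\|T^n\| = 0$ --- matches the paper's reasoning.
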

\begin{proof}
  Let $Y$ be an aspherical finite simplicial complex of dimension~$n$
  as provided by Proposition~\ref{prop:fncasimvol}. In particular, $\ent(Y) > 0$
  because of~FNCA (Theorem~\ref{thm:entfnca}). 
  Taking the wedge of a large enough number~$m$ of copies of~$Y$
  results in a finite aspherical simplicial complex~$Z := \bigvee_m Y$ of
  dimension~$n$ with~\cite[Theorem~2.6]{bsseminorm}
  \[ \ent(Z) \geq m \cdot \ent(Y) > c
  \qand
  H_n(Z;\Z) \cong 0.
  \]
  Then
  $X := (S^1)^{\times n} \lor Z
  $
  is a finite aspherical simplicial complex of dimension~$n$
  with
  \[ \ent(X) \geq \ent\bigl((S^1)^{\times n}\bigr) + \ent(Z) > c
  \qand H_n(X;\Z) \cong \Z.
  \]
  The fundamental class of~$(S^1)^{\times n}$ pushes forward to
  a generator~$\alpha$ of~$H_n(X;\R)$. As the $n$-torus has
  simplicial volume~$0$, it follows that~$\|\alpha\|_1 = 0$.
\end{proof}

 
\bibliographystyle{abbrv}
\bibliography{svbib}

\end{document}